\numberwithin{equation}{section}
\newtheoremstyle{thmlemcorr}{10pt}{10pt}{\itshape}{}{\bfseries}{.}{10pt}{{\thmname{#1}\thmnumber{ #2}\thmnote{ (#3)}}}
\newtheoremstyle{thmlemcorr*}{10pt}{10pt}{\itshape}{}{\bfseries}{.}\newline{{\thmname{#1}\thmnumber{ #2}\thmnote{ (#3)}}}
\newtheoremstyle{defi}{10pt}{10pt}{\itshape}{}{\bfseries}{.}{10pt}{{\thmname{#1}\thmnumber{ #2}\thmnote{ (#3)}}}
\newtheoremstyle{remexample}{10pt}{10pt}{}{}{\bfseries}{.}{10pt}{{\thmname{#1}\thmnumber{ #2}\thmnote{ (#3)}}}
\newtheoremstyle{ass}{10pt}{10pt}{}{}{\bfseries}{.}{10pt}{{\thmname{#1}\thmnumber{ A#2}\thmnote{ (#3)}}}
\theoremstyle{thmlemcorr}
\newtheorem{theorem}{Theorem}
\numberwithin{theorem}{section}
\newtheorem{lemma}[theorem]{Lemma}
\newtheorem{question}[theorem]{Question}
\theoremstyle{thmlemcorr*}
\newtheorem{theorem*}{Theorem}
\newtheorem{lemma*}[theorem]{Lemma}
\newtheorem{corollary*}[theorem]{Corollary}
\newtheorem{proposition*}[theorem]{Proposition}
\newtheorem{problem*}[theorem]{Problem}
\newtheorem{conjecture*}[theorem]{Conjecture}
\newtheorem{question*}[theorem]{Question}
\theoremstyle{defi}
\newtheorem{definition}[theorem]{Definition}
\theoremstyle{remexample}
\newtheorem{remark}[theorem]{Remark}
\theoremstyle{ass}
\newcommand{\Crm}{\mathrm{C}}
\newcommand{\Lrm}{\mathrm{L}}
\newcommand{\Wrm}{\mathrm{W}}
\newcommand{\Acal}{\mathcal{A}}
\newcommand{\Dcal}{\mathcal{D}}
\newcommand{\Ecal}{\mathcal{E}}
\newcommand{\Fcal}{\mathcal{F}}
\newcommand{\Hcal}{\mathcal{H}}
\newcommand{\Lcal}{\mathcal{L}}
\newcommand{\Mcal}{\mathcal{M}}
\newcommand{\Ebf}{\mathbf{E}}
\newcommand{\Ybf}{\mathbf{Y}}
\newcommand{\Abb}{\mathbb{A}}
\renewcommand{\Bbb}{\mathbb{B}}
\newcommand{\Sbb}{\mathbb{S}}
\DeclareMathOperator{\id}{id}
\DeclareMathOperator*{\wslim}{w*-lim}
\DeclareMathOperator{\diverg}{div}
\DeclareMathOperator{\curl}{curl}
\DeclareMathOperator{\spn}{span}
\newcommand{\setb}[2]{\bigl\{\, #1 \ \ \textup{\textbf{:}}\ \ #2 \,\bigr\}}
\newcommand{\setBB}[2]{\biggl\{\, #1 \ \ \textup{\textbf{:}}\ \ #2 \,\biggr\}}
\newcommand{\abs}[1]{|#1|}
\newcommand{\dpr}[1]{\langle #1 \rangle}	
\newcommand{\dprn}[1]{\langle #1 \rangle}
\newcommand{\dprb}[1]{\bigl\langle #1 \bigr\rangle}
\newcommand{\ddprb}[1]{\bigl\langle\hspace{-2.5pt}\bigl\langle #1 \bigr\rangle\hspace{-2.5pt}\bigr\rangle}
\newcommand{\cl}[1]{\overline{#1}}
\newcommand{\di}{\mathrm{d}}
\newcommand{\dd}{\;\mathrm{d}}
\newcommand{\N}{\mathbb{N}}
\newcommand{\R}{\mathbb{R}}
\newcommand{\loc}{\mathrm{loc}}
\newcommand{\sym}{\mathrm{sym}}
\newcommand{\toweakstar}{\overset{*}\rightharpoonup}
\newcommand{\sbullet}{\begin{picture}(1,1)(-0.5,-2)\circle*{2}\end{picture}}
\newcommand{\frarg}{\,\sbullet\,}
\newcommand{\BV}{\mathrm{BV}}
\newcommand{\BD}{\mathrm{BD}}
\newcommand{\BDY}{\mathbf{BDY}}
\newcommand{\toY}{\overset{\Ybf}{\to}}
\newcommand{\Rdds}{(\R^{d}\otimes\R^d)_\sym}
\def\Xint#1{\mathchoice 
{\XXint\displaystyle\textstyle{#1}}%
{\XXint\textstyle\scriptstyle{#1}}%
{\XXint\scriptstyle\scriptscriptstyle{#1}}%
{\XXint\scriptscriptstyle\scriptscriptstyle{#1}}%
\!\int} 
\def\XXint#1#2#3{{\setbox0=\hbox{$#1{#2#3}{\int}$} 
\vcenter{\hbox{$#2#3$}}\kern-.5\wd0}} 
\def\dashint{\,\Xint-}
\newcommand{\restrict}{\begin{picture}(10,8)\put(2,0){\line(0,1){7}}\put(1.8,0){\line(1,0){7}}\end{picture}}
\renewcommand{\phi}{\varphi}
\title[Structure of PDE-constrained measures]{On the structure of measures constrained by linear PDEs}
\author[G.~De Philippis]{Guido De Philippis}
\address{\textit{G.~De Philippis:} SISSA, Via Bonomea 265, 34136 Trieste, Italy.}
\email{guido.dephilippis@sissa.it}
\author[F.~Rindler]{Filip Rindler}
\address{\textit{F.~Rindler:} Mathematics Institute, University of Warwick, Coventry CV4 7AL, UK.}
\email{F.Rindler@warwick.ac.uk}
\begin{document}

\begin{abstract}
The aim of this note is to  present  some  recent results on  the structure of the  singular part of  measures satisfying a PDE constraint and to describe some  applications.
\end{abstract}

\maketitle

\section{Introduction}


We describe recent advances obtained by the authors and collaborators concerning the structure of singularities in measures satisfying a linear PDE constraint. Besides its own  theoretical interest, understanding the structure of singularities of PDE-constrained measures turns out to have several (sometimes surprising) applications in the calculus of variations, geometric measure theory, and metric  geometry.

Let \(\Acal\) be a    $k$'th-order linear constant-coefficient differential operator acting  on \(\R^N\)-valued functions, i.e.\
\[
  \Acal u := \sum_{|\alpha|\le k}A_{\alpha} \partial^\alpha u, \qquad
  u\in \Crm^\infty(\Omega;\R^N),
\]
where  $A_{\alpha}\in \R^n\otimes \R^N$ are linear maps from $\R^N$ to $\R^n$  and   $\partial^\alpha=\partial_1^{\alpha_1}\ldots\partial_d^{\alpha_d}$ for every multindex \(\alpha=(\alpha_1,\dots,\alpha_d)\in \mathbb (\N \cup \{0\})^d\). 

The starting point of the investigation is the following:

\begin{question}\label{q:Afree}
Let $\mu\in \Mcal (\Omega,\R^N) $ be  an \(\R^N\)-valued Radon measure  on an open set $\Omega \subset \R^d$  and let \(\mu\) be \(\Acal\)-free, i.e.\ $\mu$ solves the system of linear PDEs
\begin{equation} \label{eq:muPDE}
  \Acal \mu := \sum_{|\alpha|\le k}A_{\alpha} \partial^\alpha \mu = 0  \qquad \text{in the sense of distributions.}
\end{equation}
What can be said about the singular part\footnote{If not specified,   the terms ``singular'' and ``absolutely continuous'' always refer to the Lebesgue measure. We also recall that, thanks to the Radon-Nikodym theorem, a vector-valued measure $\mu$ can be written as 
\[
\mu=\frac{\di \mu } {\di |\mu|} \di |\mu|=g \Lcal^d+\frac{\di \mu } {\di |\mu|} \di |\mu|^s
\]
where \(|\mu|\) is the total variation measure, \(g\in \Lrm_\loc^1(\R^d)\) and \(\Lcal^d\) is the Lebesgue measure. 
} of \(\mu\)? 
\end{question}

In answering  the above question a prominent role is played by the \emph{wave cone} associated with the differential operator \(\Acal\):
\[
  \Lambda_{\Acal}:=\bigcup_{|\xi|=1} \ker \mathbb A^k(\xi) \subset \R^N \qquad\textrm{with}\qquad \mathbb A^k(\xi)= (2\pi {\rm i} )^{k} \sum_{|\alpha|=k}A_{\alpha}\xi^{\alpha},
\]
where we have set $\xi^\alpha := \xi_1^{\alpha_1} \cdots \xi_d^{\alpha_d}$. 

Roughly speaking,  \(\Lambda_{\Acal}\)   contains  all the amplitudes along which the system \eqref{eq:muPDE} is  \emph{not elliptic}. Indeed if we assume that  \(\Acal\) is homogeneous, \(\Acal=\sum_{|\alpha|=k}A_\alpha \partial^\alpha\), then   it is immediate to see that \(\lambda\in\R^N\) belongs to \( \Lambda_{\Acal}\) if and only if there exists a non-zero \(\xi\in \R^d\setminus\{0\}\) such that  \(\lambda h(x\cdot \xi)\) is  \(\Acal\)-free for all smooth functions \(h \colon \R\to \R\). In other words, ``one-dimensional''  oscillations and concentrations are  possible only if the amplitude (direction) belongs to the wave cone. For this reason  the  wave cone  plays a crucial role in the compensated compactness theory for sequences of $\Acal$-free maps, see~\cite{Murat78, Murat79, Tartar79, Tartar83, DiPerna85}, and in convex integration theory, see for instance~\cite{ChiodaroliDeLellisKreml15,ChiodaroliFeireislKremWiedemann17,DeLellisSzekelyhidi09,DeLellisSzekelyhidi12,DeLellisSzekelyhidi13,SzekelyhidiWiedemann12,Isett16} and the references cited therein. However, all these references are concerned with oscillations only, not with concentrations.

Since the singular part of a measure can be thought of as containing  ``condensed'' concentrations, it is quite natural to conjecture that \(|\mu|^s\)-almost everywhere the polar vector \(\frac{\di \mu}{\di |\mu|}\) belongs to \(\Lambda_{\Acal}\). This is indeed the case and the main result of~\cite{DePhilippisRindler16}:

\begin{theorem}\label{thm:main}
Let \(\Omega\subset \R^d\) be an open set, let \(\Acal\) be a $k$'th-order linear constant-coefficient differential operator as above, and let \(\mu \in \Mcal(\Omega;\R^N)\) be an \(\Acal\)-free Radon measure on $\Omega$ with values in $\R^N$. Then,
\[
\frac{\di\mu}{\di|\mu|}(x)\in \Lambda_{\Acal}  \qquad\textrm{for \(|\mu|^s\)-a.e.\ \(x\in \Omega\).}
\]
\end{theorem}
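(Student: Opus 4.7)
The plan is to argue by contradiction at a $|\mu|^s$-generic point $x_0 \in \Omega$: I would blow up $\mu$ at $x_0$ to obtain an $\Acal$-free tangent measure with constant polar vector, and then derive a rigidity contradiction via hypoellipticity of a constant-coefficient elliptic operator. A preliminary reduction --- lower-order terms of $\Acal$ scale by positive powers of the blow-up radius and drop out in the limit --- allows me to assume $\Acal$ is homogeneous of order $k$.

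At $|\mu|^s$-a.e.\ $x_0 \in \Omega$ the polar $\lambda(x_0) := \frac{\di\mu}{\di|\mu|}(x_0)$ has unit norm and the upper Lebesgue density satisfies $|\mu|(B_r(x_0))/r^d \to +\infty$. I would define the rescalings $\mu_{x_0,r}(E) := \mu(x_0 + rE)/|\mu|(B_r(x_0))$, so that $|\mu_{x_0,r}|(B_1) = 1$, and extract a subsequential weak-$*$ limit $\nu \in \Tan(\mu, x_0)$ along some $r_j \downarrow 0$. Applying Lebesgue differentiation to the $|\mu|$-measurable polar yields its approximate continuity at $|\mu|$-a.e.\ point, forcing the identification $\nu = \lambda(x_0)\,\sigma$ for some non-zero non-negative $\sigma \in \Tan(|\mu|, x_0)$. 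Since $\Acal$ is homogeneous with constant coefficients, blow-up preserves the PDE constraint, so $\Acal(\lambda(x_0)\sigma) = 0$ in $\Dcal'(\R^d)$.

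The crux is the rigidity statement: if $\lambda := \lambda(x_0) \notin \Lambda_{\Acal}$, then $\sigma \in \Crm^\infty(\R^d)$. I would consider the scalar-input, vector-valued operator $L : u \mapsto \Acal(\lambda u)$, whose Fourier symbol $\xi \mapsto \mathbb A^k(\xi)\lambda \in \C^n$ is, by the very definition of $\Lambda_{\Acal}$, non-vanishing for every $\xi \neq 0$. Hence the scalar composition $L^*L$ is a constant-coefficient operator of order $2k$ with symbol $|\mathbb A^k(\xi)\lambda|^2$, a positive homogeneous polynomial on $\R^d \setminus \{0\}$ --- i.e.\ elliptic. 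From $L\sigma = 0$ in $\Dcal'(\R^d;\R^n)$ one gets $L^*L\,\sigma = 0$ in $\Dcal'(\R^d)$, and the hypoellipticity of constant-coefficient elliptic operators upgrades $\sigma$ to $\sigma \in \Crm^\infty(\R^d)$; in particular $\sigma$ is absolutely continuous with respect to $\Lcal^d$ with smooth density.

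To close the contradiction I would invoke Preiss's principle that tangent measures of tangent measures are themselves tangent measures of $|\mu|$ at $x_0$ (at $\sigma$-a.e.\ intermediate point). Blowing up the smooth measure $\sigma$ at any point of positive density produces a non-zero constant multiple of $\Lcal^d$ inside $\Tan(|\mu|, x_0)$. This is incompatible with $|\mu|(B_r(x_0))/r^d \to +\infty$ at $|\mu|^s$-a.e.\ $x_0$, which rules out any such Lebesgue-multiple tangent of $|\mu|$; hence $\lambda(x_0) \in \Lambda_{\Acal}$. The hardest step will be the PDE rigidity itself: translating the vector-valued constraint $\Acal(\lambda\sigma)=0$ into the scalar elliptic equation $L^*L\,\sigma=0$ and upgrading a Radon measure to a smooth function via hypoellipticity. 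The elliptic-regularity ingredient is classical; what is delicate is stitching it together with Preiss's tangent-of-tangent theorem and the infinite-density behavior of $|\mu|$ at $|\mu|^s$-generic points.
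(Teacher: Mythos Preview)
Your blow-up step, the reduction to the homogeneous principal part, and the rigidity argument (forming $L^*L$ and invoking hypoellipticity of constant-coefficient elliptic operators to make the tangent $\sigma$ smooth) are all fine, and in fact the paper arrives at the same intermediate conclusion $\nu\ll\Lcal^d$ by a Fourier argument. The gap is in your closing step. You assert that obtaining a nonzero multiple of $\Lcal^d$ inside $\Tan(|\mu|,x_0)$ is ``incompatible with $|\mu|(B_r(x_0))/r^d\to+\infty$''. This is not true: tangent measures are taken with a free multiplicative normalization, so infinite $d$-density at $x_0$ does not preclude Lebesgue tangents. More to the point, Preiss constructed a \emph{purely singular} measure on the line all of whose tangent measures (at almost every point) are constant multiples of Lebesgue measure; O'Neil even produced a measure having \emph{every} nonzero measure as a tangent at almost every point. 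So neither ``$\sigma$ is absolutely continuous'' nor ``some multiple of $\Lcal^d$ lies in $\Tan(|\mu|,x_0)$'' contradicts the singularity of $|\mu|$ at $x_0$, and the tangents-of-tangents device does not close the argument.

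The paper explicitly identifies this obstruction and replaces the soft tangent-measure contradiction by a quantitative one: rather than using only the limiting equation, it exploits the ellipticity condition $\Abb(\xi)\lambda_0\neq 0$ to upgrade the weak* convergence of the rescalings $\mu_k$ to convergence in the \emph{total variation norm} on a ball (an Allard-type strong constancy argument). Concretely, one writes a PDE for the cut-off rescaled total variation, splits its Fourier side into a piece controlled by a H\"ormander--Mihlin multiplier acting on a sequence tending to zero in $\Lrm^1$ and a piece that is compact in $\Lrm^1$ via Bessel-potential smoothing, and then uses positivity and a Vitali argument to pass from $\Lrm^{1,\infty}$ to $\Lrm^1$ convergence. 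The resulting $|\mu_k-\lambda_0\nu|(B_{1/2})\to 0$ is what genuinely contradicts the singularity of $\mu$ at $x_0$, because it forces the \emph{singular parts} of the blow-ups to vanish in total variation, not merely weakly. Your proposal is missing precisely this harmonic-analytic upgrade; without it the argument cannot be completed.
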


\begin{remark}\label{rem:rhs}
Theorem~\ref{thm:main} is also valid in  the situation
\begin{equation}\label{gdpeq:rhs}
\Acal\mu =\sigma \qquad\text{for some $\sigma\in \Mcal(\Omega;\R^n)$.}
\end{equation}
This can be reduced to the setting of Theorem~\ref{thm:main} by defining  \(\tilde \mu := (\mu,\sigma)\in \Mcal(\R^d;\R^{N+n})\) and  \(\tilde {\Acal}\) (with an additional $0$'th-order term) such that \eqref{gdpeq:rhs} is equivalent to \(\tilde {\Acal} \tilde \mu=0\). It is easy to check that \(\Lambda_{\tilde{\Acal}} = \Lambda_{\Acal} \times \R^n\) and that for \(|\mu|\)-almost every point \(\frac{\di \mu}{\di |\mu|}\) is proportional to \(\frac{\di \mu}{\di |\tilde{\mu}|}\).
\end{remark}

One interesting feature of Theorem~\ref{thm:main} is that it gives information about the directional structure of $\mu$ at singular points (the ``shape of singularities''). Indeed, it is not hard to check that for all ``elementary'' \(\Acal\)-free measures of the form
\begin{equation}\label{GDPFRelem}
\mu=\lambda \nu, \qquad \text{where}\qquad
\lambda\in \Lambda_\Acal,\; \nu\in \Mcal^+(\R^d),
\end{equation}
the scalar measure \(\nu\) is necessarily translation invariant along directions that are orthogonal to  the \emph{characteristic set}
\[
\Xi(\lambda) := \setb{\xi \in \R^d}{\lambda \in \ker \Abb(\xi)}.
\]
Note that \(\Xi(\lambda)\)  turns out to be a subspace of $\R^d$ whenever $\Acal$ is a first-order operator.  In this case, the translation invariance of \(\nu\) in the directions orthogonal to \(\Xi(\lambda)\) is actually the best information one can get from \eqref{GDPFRelem}.

In the case of operators of order  $k > 1$,  due to the lack of linearity of the map $\xi \mapsto \Abb^k(\xi)$ for $k > 1$, the structure of elementary \(\Acal\)-free measures is more complicated and not yet fully understood.

In the next sections we will describe some applications of  Theorem~\ref{thm:main} to the following problems:
\begin{itemize}
\item The description of the singular part of derivatives of \(\BV\)- and \(\BD\)-maps.
\item Lower semicontinuity for integral functionals defined on measures.
\item Characterization of generalized gradient Young measures.
\item The study of the sharpness of the Rademacher's theorem .
\item Cheeger's conjecture on Lipschitz differentiability spaces.
\end{itemize}
In Section~\ref{sc:proof} we will  sketch  the proof of Theorem~\ref{thm:main}.

\section{Structure of singular derivatives} \label{gdp:sec1}

 Let  \(f \colon \mathbb R^\ell\otimes \mathbb R^d\to [0,\infty)\) be a linear growth integrand with \(f(A)\sim |A|\) for \(|A|\) large. Consider the following variational problem:
\[
\int_{\Omega} f(\nabla u) \dd x \to \min,  \qquad u\in \Crm^1(\Omega;\R^\ell) \textrm{ with given boundary conditions}.
\]
It is well known that in order to apply the Direct Method of the calculus of variations one has to relax the above problem to a setting where it is possible to obtain both compactness  of minimizing sequences and lower semicontinuity of the functional with respect to some topology, usually the weak(*) topology in some function space. Due to the linear growth of the integrand the only easily  available estimate  on a minimizing sequence \((u_k)\) is an a-priori bound on the \(\Lrm^1\)-norm of their derivativesx: 
\[
\sup_{k}\int_\Omega |\nabla u_k| \dd x<\infty.
\]
It is then quite natural to relax the functional to the space \(\BV(\Omega, \R^\ell)\) of functions of \emph{bounded variation}, i.e.\ those functions \(u\in \Lrm^1(\Omega;\R^\ell)\) whose distributional gradient is a matrix-valued Radon measure. A fine understanding of the possible behavior of measures arising as derivatives is then fundamental to  study the weak* lower semicontinuity of the functional as well as its relaxation to the space \(\BV\).

In this respect, in \cite{AmbrosioDeGiorgi88} Ambrosio and De~Giorgi proposed the following conjecture:

\begin{question}
Is the singular part of the derivative of a  function  \(u\in \BV(\Omega;\R^\ell)\), which is usually denoted by \(D^su\),  always of rank one? Namely, is it true that 
\[
\frac{\di D^s u}{\di |D^s u|}(x)=a(x)\otimes b(x)
\]
for \(|D^s u|\)-a.e.\ \(x\) and some $a(x) \in \R^\ell$, $b(x) \in \R^d$?
\end{question}

Their conjecture was motivated by the fact that this structure is trivially true  for the so-called \emph{jump part} of $D^su$ (which is always of the form $[u] \otimes n \, \Hcal^{d-1} \restrict J$, where  $J$ is the $\Hcal^{d-1}$-rectifiable jump set, $n$ is a normal on $J$, and $[u]$ is the jump height in direction $n$); see \cite[Chapter 3]{AmbrosioFuscoPallara00book} for a complete reference concerning functions of bounded variations.

A positive answer to the above question was  given by Alberti in~\cite{Alberti93} with his celebrated \emph{rank-one theorem}. It was recognized quickly that this result has a central place in the calculus of variations and importance well beyond, in particular because it implies that locally all singularities in BV-functions are necessarily \emph{one-directional}. Indeed, after a blow-up (i.e.\ magnification) procedure at $\abs{D^s u}$-almost every point, the blow-up limit measure depends only on a single direction and is translation-invariant with respect to all orthogonal directions. This is not surprising for jumps, but it is a strong assertion about all other singularities in the \emph{Cantor part} of $D^s u$, i.e., the remainder of $D^s u$ after subtracting the jump part.

While Alberti's original proof is geometric in nature, one can also interpret the theorem as a result about singularities in PDEs: BV-derivatives $Du$ satisfy the  PDE
\[
  \curl Du = 0  \qquad\text{in the sense of distributions,}
\]
which can be written with a linear constant-coefficient PDE operator $\Acal := \sum_{j=1}^d A_j \partial_j$ as $\Acal \mu = 0$ in the sense of distributions.

Besides its intrinsic theoretical interest, the rank-one theorem also has many applications in the theory of \(\BV\)-functions, for instance for lower semicontinuity and relaxation~\cite{AmbrosioDalMaso92,FonsecaMuller93,KristensenRindler10b}, integral representation theorems~\cite{BouchitteFonsecaMascarenhas98}, Young measure theory~\cite{KristensenRindler10, Rindler14},  and the study of continuity equations with BV-vector fields~\cite{Ambrosio04}. We refer to~\cite[Chapter~5]{AmbrosioFuscoPallara00book} for further history.

At the the end of this section we will see that Alberti's rank-one theorem is a straightforward consequence of Theorem~\ref{thm:main}. Let us also mention that recently a very short   proof of the  Alberti rank-one theorem has been given by Massaccesi and Vittone in~\cite{MassaccesiVittone16}.

In problems arising in the theory of geometrically-linear elasto-plasticity~\cite{Suquet78,Suquet79,TemamStrang80} one often needs to consider a larger space of functions than the space of functions of bounded variations. Indeed, in this setting energies usually only depend on the \emph{symmetric part} of the gradient and one has to consider the following type of variational problem:
\begin{equation*}
\int_{\Omega} f(\Ecal u)\dd x \to \min,  \qquad u\in \Crm^1(\Omega;\R^d)\textrm{ with given boundary conditions},
\end{equation*} 
where \(\Ecal u=(\nabla u+ \nabla u^{T})/2\in \Rdds\) is the symmetric gradient ($\Rdds$ being canonically isomorphic to the space of symmetric $(d \times d)$-matrices) and  \(f \) is  a linear-growth integrand with \(f (A)\sim |A|\) for $|A|$ large. In this case, for a minimizing sequence \((u_k)\) one can only obtain that 
\[
\sup_{k}\int_\Omega |\Ecal u_k|\dd x<\infty
\]
and, due to the failure of Korn's inequality in \(\Lrm^1\)~\cite{Ornstein62,ContiFaracoMaggi05,KirchheimKristensen16}, this is not enough to ensure that \(\sup_k \int | \nabla u_k| \,\di x<\infty\). One then introduces the space \(\BD(\Omega)\) of functions of \emph{bounded deformation}, i.e.\ those functions \(u\in \Lrm^1(\Omega;\R^d)\) such that the symmetrized distributional derivative exists as a Radon measure, i.e.,
\[
  Eu := \frac{1}{2}(Du+Du^T) \in \Mcal(\Omega;(\R^d\otimes \R^d)_\sym),
\]
see~\cite{AmbrosioCosciaDalMaso97,Temam85book,TemamStrang80}. Clearly, \(\BV(\Omega,\R^d)\subset \BD(\Omega)\) and  the inclusion is strict~\cite{ContiFaracoMaggi05,Ornstein62}. Note that for $u \in \BV(\Omega;\R^d)$ as a consequence of Alberti's rank-one theorem one has
\[
\frac{\di E^s u}{\di |E^s u|}(x)=a(x)\odot  b(x),
\]
where \(a\odot b=(a\otimes b+b\otimes a)/2\) is the symmetrized tensor product. One is then naturally led to the following conjecture:

\begin{question}\label{q:BD}
Is it true that for every function \(u\in \BD(\Omega)\) it holds that
\[
\frac{\di E^s u}{\di |E^s u|}(x)=a(x)\odot  b(x)
\]
for \(|E^s u|\)-a.e.\ \(x\) and some $a(x), b(x) \in \R^d$?
\end{question}

Again, besides its theoretical interest, it has been well known that a positive answer of the above question would have several applications to the study of lower semicontinuity  and relaxation of functionals defined on \(\BD\), see the next section, as well as in establishing the absence of a singular part for minimizers, see for instance~\cite[Remark 4.8]{FrancfortGiacominiMarigo15}. 

Let us conclude this section by   showing  how both a positive answer to Question~\ref{q:BD} and a new proof of Alberti's rank-one theorem can easily be obtained by applying  Theorem~\ref{thm:main} to suitable differential operators:

\begin{theorem} \label{cor:main}
Let \(\Omega\subset \R^d\) be open. Then:
\begin{enumerate}[(i)]
\item If \(u\in \BV(\Omega;\R^\ell)\), then
\[
\frac{\di D^s u}{\di |D^s u|}(x)=a(x)\otimes b(x) \qquad\textrm{for \(|D^s u|\)-a.e.\ \(x\) and some $a(x) \in \R^\ell$, $b(x) \in \R^d$.}
\]
\item  If \(u\in \BD(\Omega)\), then
\[
\frac{\di E^s u}{\di |E^s u|}(x)=a(x)\odot b(x) \qquad\textrm{for \(|E^s u|\)-a.e.\ \(x\) and some $a(x), b(x) \in \R^d$.}
\]
\end{enumerate}
\end{theorem}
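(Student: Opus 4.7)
The plan in both cases is to identify a homogeneous linear constant-coefficient differential operator $\Acal$ annihilating the measure in question ($Du$ for (i), $Eu$ for (ii)), compute the associated wave cone $\Lambda_\Acal$, and invoke Theorem~\ref{thm:main}.

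For (i), set $\mu := Du \in \Mcal(\Omega;\R^\ell \otimes \R^d)$ and consider the first-order curl operator defined row-wise by $(\curl M)_{ijk} := \partial_j M_{ik} - \partial_k M_{ij}$. Since partial derivatives commute in the sense of distributions, $\curl Du = 0$. Its principal symbol sends $M$ to the tensor with components $\xi_j M_{ik} - \xi_k M_{ij}$ (up to a universal constant), whose kernel at a fixed $\xi \neq 0$ consists, by a direct algebraic check, precisely of those matrices whose rows are all parallel to $\xi$, i.e.\ $M = a \otimes \xi$ for some $a \in \R^\ell$. Hence $\Lambda_{\curl}$ is the cone of rank-one matrices, and Theorem~\ref{thm:main} applied to $\mu = Du$ yields (i).

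For (ii), let $\mu := Eu \in \Mcal(\Omega;\Rdds)$ and introduce the Saint-Venant compatibility operator, namely the second-order homogeneous operator $\Acal$ given by
\[
(\Acal M)_{ijkl} := \partial_k \partial_l M_{ij} + \partial_i \partial_j M_{kl} - \partial_i \partial_l M_{jk} - \partial_j \partial_k M_{il}.
\]
Plugging $M_{ij} = \frac{1}{2}(\partial_i u_j + \partial_j u_i)$ into the above and commuting the resulting third-order derivatives shows $\Acal Eu = 0$ distributionally. The crucial step is then the wave cone identification
\[
\Lambda_\Acal = \bigl\{\, a \odot \xi \ : \ a,\xi \in \R^d\,\bigr\}.
\]
That each $a \odot \xi$ lies in $\ker \Abb(\xi)$ follows by direct substitution, since all four terms cancel in pairs. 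For the converse inclusion I would contract the symbol equation $\Abb(\xi) M = 0$ by setting $k=l$ and summing in that index, obtaining
\[
|\xi|^2\, M = \xi \otimes (M\xi) + (M\xi) \otimes \xi - (\trace M)\, \xi \otimes \xi,
\]
which rearranges algebraically into $M = a \odot \xi$ with $a$ an explicit vector in terms of $M\xi$ and $\trace M$. With the wave cone so identified, Theorem~\ref{thm:main} applied to $\mu = Eu$ gives (ii).

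The principal obstacle is the computation of $\Lambda_\Acal$ in (ii). Because $\Acal$ is of second order, the raw symbol equation is a system on $4$-tensors whose solution space is not immediately evident, and a naive attempt to invert it index-by-index is unwieldy. The trace contraction above provides the decisive simplification: it reduces the problem to a single second-tensor identity that can be explicitly solved to exhibit the required symmetric rank-one form, at which point both parts of the theorem become direct corollaries of the main theorem.
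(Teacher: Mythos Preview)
Your proposal is correct and follows essentially the same route as the paper: apply Theorem~\ref{thm:main} to $\curl$ for (i) and to the Saint-Venant compatibility operator for (ii), after identifying the respective wave cones. The only cosmetic difference is that you use the full four-index Saint-Venant operator, whereas the paper uses its two-index contraction $\curl\curl$; your trace contraction in fact reproduces exactly the paper's symbol equation, so the wave cone computation you spell out is precisely the ``direct computation'' the paper leaves to the reader.
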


\begin{proof}
Observe that \(\mu=Du\) is curl-free,
\[
0=\curl \mu=\Big(\partial_i \mu^{k}_{j}-\partial_j  \mu^{k}_{i}\Big)_{i,j=1,\ldots, d;\; k=1,\ldots,\ell} \; .
\]
Then, assertion~(i) above follows from 
\[
\Lambda_{\curl}=\setb{ a\otimes \xi }{ a\in \R^\ell,\, \xi \in \R^d \setminus \{0\}},
\]
which can be proved by an easy computation.

In the same way, if \(\mu=Eu\), then $\mu$ satisfies the \emph{Saint-Venant compatibility conditions},
\[
0=\curl\curl \mu:= \biggl( \sum_{i=1}^d \partial_{ik} \mu_{i}^j+\partial_{ij} \mu_{i}^k-\partial_{jk} \mu_{i}^i-\partial_{ii} \mu_{j}^k \biggr)_{j,k=1,\ldots,d} \; .
\]
It is now a direct computation to check that 
\[
\Lambda_{\curl\curl}=\setb{ a\odot \xi}{  a\in \R^d,\, \xi \in \R^d \setminus \{0\} }.
\]
This shows assertion~(ii) above.
\end{proof}

%

\section{Functionals on measures}

The theory of integral functionals with linear-growth integrands defined on vector-valued measures satisfying PDE constraints is central to many questions of the calculus of variations. In particular, their relaxation and lower semicontinuity properties have attracted a lot of attention, see for instance~\cite{AmbrosioDalMaso92,FonsecaMuller93,FonsecaMuller99,FonsecaLeoniMuller04,KristensenRindler10b,Rindler11,BaiaChermisiMatiasSantos13}. Based on Theorem~\ref{thm:main} one can unify and extend many of these results.

Concretely, let $\Omega \subset \R^d$ be an open and bounded set and consider the functional
\begin{equation}\label{eq:F}
 \Fcal[\mu] :=  \int_\Omega f \biggl(x,\frac{\di\mu}{\di\Lcal^d}(x)\biggr) \dd x + \int_\Omega f^\infty \biggl(x,\frac{\di\mu^s}{\di|\mu|^s}(x)\biggr) \dd |\mu|^s(x),
\end{equation}
defined for finite vector Radon measures $\mu \in \Mcal(\Omega;\R^N)$ with values in $\R^N$ and satisfying
\[
  \Acal \mu  = 0  \qquad
  \text{in the sense of distributions.}
\]
Here, $f \colon {\Omega} \times \R^N \to [0,\infty)$ is a Borel integrand that has \emph{linear growth at infinity}, i.e.,
\[
  \abs{f(x,A)} \leq M(1+\abs{A})  \qquad
  \text{for all $(x,A) \in {\Omega} \times \R^N$.}
\]
We also assume that the \emph{strong recession function} of $f$ exists, which is defined as
\begin{equation}\label{eq:f_infty}
f^\infty(x,A) := \lim_{\substack {x' \to x \\ A' \to A \\ t \to \infty}} \frac{f(x',tA')}{t}, \qquad (x,A) \in \cl{\Omega} \times \R^N.
\end{equation}

The (weak*) lower semicontinuity properties of $\Fcal$ depend on (generalized) \emph{convexity} properties of the integrand in its second variable. For this, we need the following definition: A Borel function $h \colon \R^N \to \R$ is called \emph{$\Acal^k$-quasiconvex} ($\Acal^k=\sum_{|\alpha|=k}A_\alpha\partial^\alpha $ being the principal part of $\Acal$) if
\[
  h(F) \leq \int_Q h(F + w(y)) \dd y
\]
for all $F \in \R^N$ and all $Q$-periodic $w \in \Crm^\infty(Q;\R^N)$ such that $\Acal^k w = 0$ and $\int_Q w \dd y = 0$, where $Q := (0,1)^d$ is the open unit cube in $\R^d$; see~\cite{FonsecaMuller99} for more on this class of integrands. For $\Acal = \curl$ this notion is equivalent to the classical \emph{quasiconvexity} as introduced by Morrey~\cite{Morrey52}.

It has been known for a long time that $\Acal^k$-quasiconvexity of $f(x,\frarg)$ is a necessary condition for the sequential weak* lower semicontinuity of $\Fcal$ on $\Acal$-free measures. As for the sufficiency, we can now prove the following general lower semicontinuity theorem, which is taken from~\cite{ArroyoRabasaDePhilippisRindler17?} (where also more general results can be found):

\begin{theorem} \label{thm:lsc}
Let $f \colon \Omega \times \R^N \to [0,\infty)$ be a continuous integrand with linear growth  at infinity such that $f$ is uniformly Lipschitz in its second argument, $f^\infty$ exists as in~\eqref{eq:f_infty}, and $f(x,\frarg)$ is $\Acal^k$-quasiconvex for all $x \in \Omega$. Further assume that there exists a modulus of continuity $\omega \colon [0,\infty) \to [0,\infty)$ (increasing, continuous, $\omega(0) = 0$) such that
\begin{equation}\label{eq:modulus}
  |f(x,A) - f(y,A)|\le \omega(|x - y|)(1 + |A|)  \qquad
\text{for all $x,y \in {\Omega}$, $A \in \R^N$.}
\end{equation}
Then, the functional $\Fcal$ is sequentially weakly* lower semicontinuous on the space
\[
  \Mcal(\Omega;\R^N) \cap \ker \Acal := \setb{ \mu \in \Mcal(\Omega;\R^N) }{ \Acal \mu = 0 }.
\]
\end{theorem}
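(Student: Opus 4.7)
The strategy is a Fonseca--M\"uller--type blow-up, now powered at singular points by Theorem~\ref{thm:main}. Take $\mu_j \toweakstar \mu$ in $\Mcal(\Omega;\R^N)$ with $\Acal \mu_j = 0$ and $\liminf_j \Fcal[\mu_j] < \infty$, and after passing to a subsequence assume the non-negative energy measures
\[
\lambda_j := f\bigl(\cdot, \tfrac{\di \mu_j^a}{\di \Lcal^d}\bigr)\, \Lcal^d + f^\infty\bigl(\cdot, \tfrac{\di \mu_j^s}{\di |\mu_j|^s}\bigr)\, |\mu_j|^s
\]
converge weakly* in $\Mcal^+(\cl{\Omega})$ to some $\lambda$. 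Since $\lambda_j(\Omega) = \Fcal[\mu_j]$, to conclude $\Fcal[\mu] \le \liminf_j \Fcal[\mu_j]$ it suffices to prove the pointwise Radon--Nikod\'ym bounds $\frac{\di \lambda}{\di \Lcal^d}(x_0) \ge f\bigl(x_0, \frac{\di \mu}{\di \Lcal^d}(x_0)\bigr)$ at $\Lcal^d$-a.e.\ $x_0$ and $\frac{\di \lambda}{\di |\mu|^s}(x_0) \ge f^\infty\bigl(x_0, \frac{\di \mu^s}{\di |\mu|^s}(x_0)\bigr)$ at $|\mu|^s$-a.e.\ $x_0$; the piece of $\lambda$ singular with respect to both reference measures is non-negative and can be discarded.

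\textbf{Absolutely continuous points.} At an $\Lcal^d$-Lebesgue point $x_0$ of the relevant densities, I would rescale $\mu_j$ via the scaling that preserves $\Acal^k$-freeness and take a diagonal sequence $r_j \downarrow 0$, obtaining maps $\nu_j$ on the unit cube with $\Acal^k \nu_j \to 0$ in a negative Sobolev norm and $\nu_j \toweakstar A_0\, \Lcal^d$, where $A_0 := \frac{\di \mu}{\di \Lcal^d}(x_0)$. After mollification and a Fourier projection onto exactly $\Acal^k$-free, mean-$A_0$ periodic test fields, the $\Acal^k$-quasiconvexity of $f(x_0, \cdot)$ produces the desired Jensen-type lower bound; the uniform Lipschitz bound in $A$ and the modulus-of-continuity hypothesis~\eqref{eq:modulus} absorb the replacement of $f(\cdot, \cdot)$ by $f(x_0, \cdot)$ with $o(1)$ error.

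\textbf{Singular points: the heart of the argument.} At $|\mu|^s$-a.e.\ $x_0$, Theorem~\ref{thm:main} yields $P_0 := \frac{\di \mu^s}{\di |\mu|^s}(x_0) \in \Lambda_\Acal$, so there is $\xi_0 \in \R^d\setminus\{0\}$ with $\mathbb A^k(\xi_0) P_0 = 0$. Consequently, one-directional profiles $P_0 h(y\cdot \xi_0)$ with $h \in \Crm^\infty_\per(\R)$ of mean zero are admissible $\Acal^k$-free, mean-free test fields on the unit cube. I would then choose radii $r_j \downarrow 0$ at a standard singular-blow-up point (so tangent measures to $\mu$ can be written in the form $P_0 \nu$ for some $\nu \in \Mcal^+(\R^d)$, in line with \eqref{GDPFRelem}), rescale $\mu_j$ by the $|\mu|^s$-adapted scaling, and diagonalise to obtain $\nu_j$ with $\Acal^k \nu_j \to 0$ and $\nu_j \toweakstar P_0 \nu$. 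The recession function $f^\infty(x_0,\cdot)$ is positively $1$-homogeneous and inherits $\Acal^k$-quasiconvexity from $f(x_0,\cdot)$; applying its Jensen inequality along perturbations $P_0 h(y\cdot \xi_0)$, together with the modulus~\eqref{eq:modulus} to freeze $x$, yields $\frac{\di \lambda}{\di |\mu|^s}(x_0) \ge f^\infty(x_0, P_0)$.

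\textbf{Main obstacle.} The crux is the singular blow-up: one must construct an exactly $\Acal^k$-free approximant with prescribed mean $P_0$ whose admissible perturbations can realise the concentrated profile of the tangent measure. This is feasible precisely \emph{because} Theorem~\ref{thm:main} confines $P_0$ to $\Lambda_\Acal$; absent this, no wave-cone-compatible profile $P_0 h(y\cdot \xi_0)$ would be available and the quasiconvexity inequality could not be exploited. A secondary point is that $\Acal$ need not be homogeneous, but the lower-order terms are suppressed by the rescaling (equivalently, absorbed into a right-hand side via the device of Remark~\ref{rem:rhs}), so only the principal symbol $\mathbb A^k$ controls the wave cone in the limit, as needed.
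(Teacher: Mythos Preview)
Your treatment of the absolutely continuous part is standard and fine. The difficulty---and the gap in your argument---is entirely in the singular step.

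At a singular blow-up point the diagonal sequence $\nu_j$ converges weakly* to $P_0\nu$ with $\nu$ a \emph{singular} positive measure, not to a constant. Hence $\nu_j$ is not of the form ``constant plus mean-zero $\Acal^k$-free perturbation'', and the $\Acal^k$-quasiconvexity inequality
\[
f^\infty(x_0,P_0)\le \int_Q f^\infty\bigl(x_0,P_0+w(y)\bigr)\dd y
\]
cannot be applied with $w=\nu_j-P_0$. Testing instead with the one-directional fields $w(y)=P_0\,h(y\cdot\xi_0)$ you propose only yields convexity of $t\mapsto f^\infty(x_0,tP_0)$ (trivial by $1$-homogeneity) or, more generally, directional convexity of $f^\infty(x_0,\frarg)$ along $\ker\Abb^k(\xi_0)$. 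This is \emph{not} what is needed: to pass to the limit under $\nu_j\toweakstar P_0\nu$ you must bound $\int f^\infty(x_0,\frarg)\,\di|\nu_j|$ from below by a \emph{linear} functional of $\nu_j$, i.e.\ you need a supporting affine minorant of $f^\infty(x_0,\frarg)$ at $P_0$, valid in \emph{all} directions. Your argument does not produce one.

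The paper closes exactly this gap by invoking the Kirchheim--Kristensen theorem~\cite{KirchheimKristensen16}: a positively $1$-homogeneous function that is $\Lambda_\Acal$-directionally convex (which $\Acal^k$-quasiconvexity implies via your one-directional oscillations) is genuinely convex at every point of $\Lambda_\Acal$, i.e.\ admits a global supporting hyperplane there. Combined with Theorem~\ref{thm:main}, which forces $P_0\in\Lambda_\Acal$, one obtains an affine $\ell\le f^\infty(x_0,\frarg)$ with $\ell(P_0)=f^\infty(x_0,P_0)$, and then the singular lower bound follows by the elementary Reshetnyak-type computation
\[
\int f^\infty\Bigl(x_0,\tfrac{\di\nu_j}{\di|\nu_j|}\Bigr)\di|\nu_j|\ \ge\ \int \ell\Bigl(\tfrac{\di\nu_j}{\di|\nu_j|}\Bigr)\di|\nu_j|\ \longrightarrow\ \ell(P_0)\,\nu=f^\infty(x_0,P_0)\,\nu,
\]
carried out within the generalized Young measure framework. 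In short: you have correctly located where Theorem~\ref{thm:main} enters, but the bridge from ``$P_0\in\Lambda_\Acal$'' to ``Jensen at singular points'' is the Kirchheim--Kristensen convexity, not the quasiconvexity inequality itself.
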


\begin{remark} \label{rem:special}
As special cases of Theorem~\ref{thm:lsc} we get, among others, the following well-known results:

\begin{enumerate}[(i)]
\item For $\Acal = \curl$, one obtains  BV-lower semicontinuity results in the spirit of  Ambrosio--Dal Maso~\cite{AmbrosioDalMaso92} and Fonseca--M\"{u}ller~\cite{FonsecaMuller93}.

\item For  $\Acal = \curl \curl$, the second order operator expressing the Saint-Venant compatibility conditions, we re-prove the lower semicontinuity and relaxation theorem in the space of functions of bounded deformation (BD) from~\cite{Rindler11}.
\item For first-order operators $\Acal$, a  similar result was proved in~\cite{BaiaChermisiMatiasSantos13}.
\end{enumerate}
\end{remark}

The proof of Theorem~\ref{thm:lsc} essentially follows by combining Theorem~\ref{thm:main} with the main theorem of~\cite{KirchheimKristensen16}, which  establishes that the restriction of $f^\infty$ to the linear space spanned by the wave cone is in fact \emph{convex} at all points of  \(\Lambda_\Acal\) (in the sense that a supporting hyperplane exists). In this way we gain classical convexity for  $f^\infty$ at singular points, which can be exploited via the theory of 
generalized Young measures developed in~\cite{DiPernaMajda87,AlibertBouchitte97,KristensenRindler10} and also briefly discussed in the next section.

One can also show relaxation results, where $f$ is not assumed to be $\Acal^k$-quasiconvex in the second argument and the task becomes to compute the largest weakly* lower semicontinuous functional below $\Fcal$; see~\cite{ArroyoRabasaDePhilippisRindler17?} for more details.

\section{Characterization of generalized Young measures}

Young measures quantitatively describe the asymptotic oscillations in $\Lrm^p$-weakly converging sequences. They were introduced in~\cite{Young37,Young42a,Young42b} and later developed into an important tool in modern PDE theory and the calculus of variations in~\cite{Tartar79,Tartar83,Ball89,BallJames87} and many other works. In order to deal with concentration effects as well, DiPerna \& Majda extended the framework to so-called \enquote{generalized} Young measures, see~\cite{DiPernaMajda87,AlibertBouchitte97,KruzikRoubicek97,FonsecaMullerPedregal98,Sychev99,KristensenRindler10}. In the following we will refer also to these objects simply as \enquote{Young measures}. We recall some basic theory, for which proofs and examples can be found in~\cite{AlibertBouchitte97,KristensenRindler10,Rindler11}.

Let again $\Omega \subset \R^d$ be a bounded Lipschitz domain. For $f \in \Crm(\cl{\Omega} \times \R^N)$ we define
\[
  \Ebf(\Omega;\R^N) := \setb{ f \in \Crm(\cl{\Omega} \times \R^N) }{ \text{$f^\infty$ exists in the sense~\eqref{eq:f_infty}} }.
\]

A \emph{(generalized) Young measure} $\nu \in \Ybf(\Omega;\R^N) \subset \Ebf(\Omega;\R^N)^*$ on the open set $\Omega \subset \R^d$ with values in $\R^N$ is a triple $\nu = (\nu_x,\lambda_\nu,\nu_x^\infty)$ consisting of
\begin{enumerate}[(i)]
  \item a parametrized family of probability measures $(\nu_x)_{x \in \Omega} \subset \Mcal_1(\R^N)$, called the \emph{oscillation measure};
  \item a positive finite measure $\lambda_\nu \in \Mcal_+(\cl{\Omega})$, called the \emph{concentration measure}; and
  \item a parametrized family of probability measures $(\nu_x^\infty)_{x \in \cl{\Omega}} \subset \Mcal_1(\Sbb^{N-1})$, called the \emph{concentration-direction measure},
\end{enumerate}
for which we require that
\begin{enumerate}[(i)]
  \item[(iv)] the map $x \mapsto \nu_x$ is \emph{weakly* measurable} with respect to $\Lcal^d$, i.e.\ the function $x \mapsto \dpr{f(x,\frarg),\nu_x}$ is $\Lcal^d$-measurable for all bounded Borel functions $f \colon \Omega \times \R^N \to \R$,
  \item[(v)] the map $x \mapsto \nu_x^\infty$ is weakly* measurable with respect to $\lambda_\nu$, and
  \item[(vi)] $x \mapsto \dprn{\abs{\frarg},\nu_x} \in \Lrm^1(\Omega)$.
\end{enumerate}

The \emph{duality pairing} between $f \in \Ebf(\Omega;\R^N)$ and $\nu \in \Ybf(\Omega;\R^N)$ is given as
\begin{align*}
  \ddprb{f,\nu} &:= \int_\Omega \dprb{f(x,\frarg), \nu_x} \dd x
    + \int_{\cl{\Omega}} \dprb{f^\infty(x,\frarg),\nu_x^\infty} \dd \lambda_\nu(x) \\
  &:= \int_\Omega \int_{\R^N} f(x,A) \dd \nu_x(A) \dd x
  + \int_{\cl{\Omega}} \int_{\partial \Bbb^N} f^\infty(x,A) \dd \nu_x^\infty(A) \dd \lambda_\nu(x).
\end{align*}
If $(\gamma_j) \subset \Mcal(\cl{\Omega};\R^N)$ is a sequence of Radon measures with $\sup_j \abs{\gamma_j}(\cl{\Omega}) < \infty$, then we say that the sequence $(\gamma_j)$ \emph{generates} a Young measure $\nu \in \Ybf(\Omega;\R^N)$, in symbols $\gamma_j \toY \nu$, if for all $f \in \Ebf(\Omega;\R^N)$ it holds that
\begin{align*}
&f \biggl( x, \frac{\di \gamma_j}{\di \Lcal^d}(x)\biggr) \,\Lcal^d \restrict \Omega
+ f^\infty \biggl(x, \frac{\di \gamma^s_j}{\di \abs{\gamma^s_j}}(x) \biggr) \, \abs{\gamma^s_j}(\di x) \\
&\qquad\toweakstar\;\; \dprb{f(x,\frarg), \nu_x} \, \Lcal^d \restrict \Omega + \dprb{f^\infty(x,\frarg),
\nu_x^\infty} \, \lambda_\nu(\di x)  \qquad\text{in $\Mcal(\cl{\Omega})$.}
\end{align*}
Here, $\gamma_j^s$ is the singular part of $\gamma_j$ with respect to Lebesgue measure.





It can be shown that if $(\gamma_j) \subset \Mcal(\cl{\Omega};\R^N)$ is a sequence of measures with $\sup_j \abs{\gamma_j}(\cl{\Omega}) < \infty$ as above, then there exists a subsequence (not relabeled) and a Young measure $\nu \in \Ybf(\Omega;\R^N)$ such that $\gamma_j \toY \nu$, see~\cite{KristensenRindler10}.

When considering generating sequences $(\gamma_j)$ as above that satisfy a differential constraint like curl-freeness (i.e.\ the generating sequence is a sequence of \emph{gradients}), the following question arises:

\begin{question}
Can one characterize the class of Young measures generated by sequences satisfying some (linear) PDE constraint?
\end{question}

In applications, such results provide valuable information on the allowed oscillations and concentrations that are possible under this differential constraint, which usually constitutes a strong restriction. Characterization theorems are of particular use in the relaxation of minimization problems for non-convex integral functionals, where one passes from a functional defined on functions to one defined on Young measures. A characterization theorem then allows one to restrict the class of Young measures over which to minimize. This strategy is explained in detail (for classical Young measures) in~\cite{Pedregal97book}. 

The first general classification results are due to Kinderlehrer \& Pedregal~\cite{KinderlehrerPedregal91,KinderlehrerPedregal94}, who characterized classical \emph{gradient} Young measures, i.e.\ those generated by gradients of $\Wrm^{1,p}$-bounded sequences, $1 < p \leq \infty$. Their theorems put such gradient Young measures in duality with quasiconvex functions. For generalized Young measures the corresponding result was proved in~\cite{FonsecaMullerPedregal98} (also see~\cite{KalamajskaKruzik08}) and numerous other characterization results in the spirit of the Kinderlehrer--Pedregal theorems have since appeared, see for instance~\cite{KruzikRoubicek96,FonsecaMuller99,FonsecaKruzik10,BenesovaKruzik16}.

The characterization of generalized BV-Young measures, i.e.\ those $\nu$ generated by a sequence $(Du_j)$ of the $\BV$-derivatives of maps $u_j \in \BV(\Omega;\R^\ell)$) was first achieved in~\cite{KristensenRindler10}. A different, \enquote{local} proof was given in~\cite{Rindler14}, another improvement is in~\cite[Theorem~6.2]{KirchheimKristensen16}. All of these arguments crucially use Alberti's rank-one theorem.

The most interesting case beyond BV is again the case of functions of bounded deformation (BD), which were introduced above: In plasticity theory~\cite{Suquet78,Suquet79,TemamStrang80}, one often deals with sequences of uniformly $\Lrm^1$-bounded symmetric gradients $\Ecal u_j := (\nabla u_j + \nabla u_j^T)/2$. In order to understand the asymptotic oscillations and concentrations in such sequences  $(\Ecal u_j)$ one needs to characterize the (generalized) Young measures $\nu$ generated by them. We call such $\nu$ \emph{BD-Young measures} and write $\nu \in \BDY(\Omega)$, since all BD-functions can be reached as weak* limits of sequences $(u_j)$ as above.



In this situation the following result can be shown, see~\cite{DePhilippisRindler17}:

\begin{theorem} \label{thm:BDY_charact}
Let $\nu \in \Ybf(\Omega;\Rdds)$ be a (generalized) Young measure. Then, $\nu$ is a BD-Young measure, $\nu \in \BDY(\Omega)$, if and only if there exists $u \in \BD(\Omega)$ with
\[
  \dprb{\id,\nu_x} \, \Lcal^d_x + \dprb{\id,\nu_x^\infty} \,(\lambda_\nu \restrict \Omega)(\di x) = Eu
\]
and for all symmetric-quasiconvex $h \in \Crm(\Rdds)$ with linear growth at infinity, the \emph{Jensen-type inequality}
\[
  h \biggl( \dprb{\id,\nu_x} + \dprb{\id,\nu_x^\infty} \frac{\di \lambda_\nu}{\di \Lcal^d}(x) \biggr)
    \leq \dprb{h,\nu_x} + \dprb{h^\#,\nu_x^\infty} \frac{\di \lambda_\nu}{\di \Lcal^d}(x).
\]
holds at $\Lcal^d$-almost every $x \in \Omega$, where $h^\#$ is defined via
\[
h^\#(A) := \limsup_{\substack {A' \to A \\ t \to \infty}} \frac{h(tA')}{t}, \qquad A \in \Rdds.
\]
\end{theorem}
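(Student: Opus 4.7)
The argument splits into necessity and sufficiency.

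\emph{Necessity.} Assume $Eu_j \toY \nu$ for some $(u_j) \subset \BD(\Omega)$. Weak* compactness of $(Eu_j)$ in $\Mcal(\Omega;\Rdds)$ (after normalizing $u_j$ to have zero integral) provides a limit measure which satisfies the Saint--Venant compatibility conditions; by testing the Young-measure convergence against integrands of the form $\varphi(x) \cdot A$ with $\varphi \in \Crm_c(\Omega;\Rdds)$ one identifies this limit as $\dpr{\id,\nu_x}\Lcal^d + \dpr{\id,\nu_x^\infty}(\lambda_\nu \restrict \Omega) = Eu$ for some $u \in \BD(\Omega)$. For the Jensen inequality, fix a Lebesgue point $x_0 \in \Omega$ of the maps $x \mapsto \dpr{h,\nu_x}$, $x \mapsto \dpr{h^\#,\nu_x^\infty}\frac{\di \lambda_\nu}{\di \Lcal^d}(x)$, and the barycenter. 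A blow-up at $x_0$ produces a sequence in $\BD$ on the unit cube whose symmetric gradients generate a translation-invariant Young measure with oscillation $\nu_{x_0}$ and concentration mass $\frac{\di \lambda_\nu}{\di \Lcal^d}(x_0)$ in direction $\nu_{x_0}^\infty$. Applying Theorem~\ref{thm:lsc} with $\Acal = \curl\curl$ to the functional built from $h$ and $h^\#$, which is $\curl\curl$-quasiconvex and of linear growth, and comparing the lower bound with the value attained by any affine competitor whose symmetric gradient equals the barycenter, yields the pointwise Jensen inequality.

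\emph{Sufficiency.} Given $\nu$ satisfying both conditions, the task is to construct $(u_j) \subset \BD(\Omega)$ with $Eu_j \toY \nu$. I would follow the blueprint of~\cite{KristensenRindler10} for the BV-case, now adapted to BD, in three stages:
\begin{enumerate}[(a)]
\item \emph{Localization}: Decompose $\Omega$ into a fine dyadic grid and replace $\nu$ on each small cube by a homogeneous Young measure whose oscillation, concentration direction, and concentration mass are cubewise averages.
\item \emph{Elementary generation}: On each cube, generate the homogeneous target separately for its absolutely continuous and its concentration part. The oscillation part is produced by a Hahn--Banach argument on the cone of symmetric-quasiconvex integrands with linear growth; the Jensen hypothesis says precisely that the barycenter lies in the closed convex hull of point evaluations generated by symmetric gradients. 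The concentration part exploits Corollary~\ref{cor:main}(ii): at $\lambda_\nu^s$-a.e.\ $x$, the polar of the barycenter $Eu$ is forced to have the form $a(x)\odot b(x)$, and such an elementary direction can be realized explicitly by sequences $u_j(x) := a\,\chi_j(b\cdot x)$ with $\chi_j$ a smoothed one-dimensional jump profile, since then $Eu_j = (a\odot b)\,\chi_j'(b\cdot x)$ concentrates on the hyperplane $b\cdot x = 0$ with the correct polar.
\item \emph{Gluing}: Combine the cubewise sequences into a global BD-map, adjusting boundary values via truncation and convolution smoothing so that the cube contributions concatenate without spurious interface concentrations.
\end{enumerate}
A diagonal procedure across grid refinement and truncation parameters then produces the required global generating sequence.

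The main obstacle is stage~(b). For BV one can freely realize any rank-one tensor $a\otimes b$ as the gradient of a piecewise affine function, but in BD the polar of the singular symmetric derivative is forced into the symmetric rank-one cone $\{a\odot b\}$, which is strictly smaller than the full rank-one cone in $\Rdds$. The content of Theorem~\ref{thm:main}, specialized to $\Acal=\curl\curl$ via Theorem~\ref{cor:main}(ii), is precisely what ensures the hypothesis is structurally consistent: the barycenter of any candidate $\nu$ satisfying condition~(i) can only concentrate in this symmetric rank-one cone, so the explicit one-dimensional profiles above suffice. Once the elementary case is handled, the rest of the construction follows the by-now standard Kinderlehrer--Pedregal--type scheme combined with the abstract $\curl\curl$-free lower semicontinuity supplied by Theorem~\ref{thm:lsc}.
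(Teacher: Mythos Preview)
Your overall architecture matches the paper's: necessity via lower semicontinuity (the paper points to~\cite{Rindler11}; your blow-up combined with Theorem~\ref{thm:lsc} is equivalent), and sufficiency via localization, elementary generation of homogeneous pieces, and gluing, with Corollary~\ref{cor:main}(ii) decisive at singular points. The paper phrases the localization through \emph{tangent Young measures} (regular and singular) rather than a dyadic grid, but that is packaging.

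There is, however, a genuine gap in your step~(b) at singular points. Your profiles $u_j(x)=a\,\chi_j(b\cdot x)$ generate a concentration-direction measure equal to the \emph{Dirac mass} $\delta_{(a\odot b)/|a\odot b|}$, whereas the hypotheses constrain only the \emph{barycenter} $\dpr{\id,\nu_x^\infty}$ to be proportional to $a(x)\odot b(x)$; the probability measure $\nu_x^\infty$ itself may be spread over the whole sphere. Since the theorem imposes no Jensen inequality at $\lambda_\nu^s$-points, you cannot simply reuse the Hahn--Banach step there either. The paper's ``artificial concentrations by compressing symmetric gradients in one direction'' is a genuinely two-stage construction: first generate $\nu_{x_0}^\infty$ as the \emph{oscillation} measure of an auxiliary BD-sequence on a cube---the missing Jensen condition being supplied for free because symmetric-quasiconvex positively $1$-homogeneous functions are convex at wave-cone points $a\odot b$ (Kirchheim--Kristensen~\cite{KirchheimKristensen16})---and then rescale and stack that oscillating sequence along the $b$-axis so that the oscillations collapse into concentrations carrying the full distribution $\nu_{x_0}^\infty$, not merely its barycenter. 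The rank-one theorem is what singles out the direction $b$ along which this compression is compatible with the $\curl\curl$-constraint; your one-dimensional profile is only the degenerate case $\nu_{x_0}^\infty=\delta_{(a\odot b)/|a\odot b|}$ of this mechanism.
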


One application of this result (in the spirit of Young's original work~\cite{Young42a,Young42b,Young80book}) is the following: For a suitable integrand $f \colon \Omega \times \Rdds \to \R$, the minimum principle
\begin{equation} \label{eq:minprinc_ext}
  \ddprb{f,\nu} \to \min,  \quad\text{$\nu \in \BDY(\Omega)$.}
\end{equation}
can be seen as the \emph{extension-relaxation} of the minimum principle 
\begin{equation} \label{eq:minprinc_orig}
  \int_\Omega f(x,\Ecal u(x)) \dd x + \int_\Omega f^\infty \biggl(x,\frac{\di E^s u}{\di \abs{E^s u}}(x)\biggr) \dd \abs{E^s u} \to \min
\end{equation}
over $u \in \BD(\Omega)$. The point is that~\eqref{eq:minprinc_orig} may not be solvable if $f$ is not symmetric-quasiconvex, whereas~\eqref{eq:minprinc_ext} always has a solution. In this situation, Theorem~\ref{thm:BDY_charact} then gives (abstract) restrictions on the Young measures to be considered in~\eqref{eq:minprinc_ext}. Another type of relaxation involving the symmetric-quasiconvex envelope of $f$ is investigated in~\cite{ArroyoRabasaDePhilippisRindler17?} within the framework of general linear PDE side-constraints.

The necessity part of Theorem~\ref{thm:BDY_charact} follows from a lower semicontinuity or relaxation theorem like the one in~\cite{Rindler11}. For the sufficiency part (which is quite involved), one first characterizes so-called \emph{tangent Young measures}, which are localized versions of Young measures. There are two types: regular and singular tangent Young measures, depending on whether regular (Lebesgue measure-like) effects or singular effects dominate around the blow-up point. We stress that the argument crucially rests on the BD-analogue of Alberti's rank-one theorem, see Corollary~\ref{cor:main}~(ii). Technically, in one of the proof steps to establish Theorem~\ref{thm:BDY_charact} we need to create \enquote{artificial concentrations} by compressing symmetric gradients in one direction. This is only possible if we know precisely what these singularities look like.

A characterization results for Young measures under general linear PDE constrainsts is currently not available (there is a partial result in the work~\cite{BaiaMatiasSantos13}, but limited to first-order operators and needing additional technical assumptions). The reason is that currently not enough is known about the directional structure of $\Acal$-free measures at singular points.

\section{The converse of Rademacher's theorem}\label{gdp:sec2}

Rademacher's theorem asserts that  a Lipschitz function  \(f\in \Wrm^{1,\infty}(\R^d,\R^\ell)\) is diffferentiable \(\Lcal^d\)-almost everywhere. A natural question, which has attracted considerable attention,  is to understand how sharp this result is. The following questions have been folklore in the area for a while:

\begin{question}[Strong converse of Rademacher's theorem]\label{q:Rademacher_strong}
Given a Lebesgue null set \(E\subset \R^d\) is it possible to find some \(\ell\ge 1\) and a Lipschitz function \(f\in \Wrm^{1,\infty}(\R^d,\R^\ell)\) such that \(f\) is not differentiable in   any point of \(E\)?
\end{question}

\begin{question}[Weak converse of Rademacher's theorem]\label{q:Rademacher_weak} Let \(\nu\in \Mcal_+(\R^d)\) be a positive Radon measure such that every Lipschitz function is differentiable \(\nu\)-almost everywhere. Is  \(\nu\) necessarily absolutely continuous with respect to $\Lcal^d$?
\end{question}
Clearly, a positive answer to Question~\ref{q:Rademacher_strong} implies a positive answer to Question~\ref{q:Rademacher_weak}. Let us also stress that   in answering  Question~\ref{q:Rademacher_strong}, an important role is played by the dimension \(\ell\) of the target set, see point~(ii) below,  while this does not have any influence on Question~\ref{q:Rademacher_weak}, see~\cite{AlbertiMarchese16}. We refer to~\cite{AlbertiCsornyeiPreiss05, AlbertiCsornyeiPreiss10, AlbertiMarchese16} for a detailed account on the history of these problems and here we simply record the following facts:
\begin{enumerate}[(i)]
\item For \(d=1\) a positive answer to Question~\ref{q:Rademacher_strong} is due to Zahorski~\cite{Zahorski46}.
\item For \(d\ge 2\) there exists a null set \(E\) such that every  Lipschitz function \(f \colon \R^{d}\to \R^\ell\) with $\ell < d$ is differentiable in at least one point of \(E\). This is was proved by  Preiss in~\cite{Preiss90}   for \(d=2\) and later extended by  Preiss and Speight in~\cite{PreissSpeight14} to every dimension.
\item For \(d=2\) a positive answer to Question~\ref{q:Rademacher_strong} has been  given by Alberti, Cs\"ornyei and Preiss as  a consequence of their  deep  result concerning the structure of null sets in the plane~\cite{AlbertiCsornyeiPreiss05, AlbertiCsornyeiPreiss10,AlbertiCsornyeiPreissToAppear}. Namely, they show that for every null set \(E\subset \R^2\) there exists a Lipschitz function \(f \colon \R^2\to \R^2\) such that \(f\) is not differentiable at any point of \(E\).
\item For \(d\ge2\)  an extension of the result described in point~(iii) above, i.e.\ that for every null set \(E\subset \R^d\) there exists a Lipschitz function \(f \colon \R^d\to \R^d\) such that \(f\) is not differentiable at any point of \(E\), has been announced in 2011 by Cs\"ornyei and Jones~\cite{Jones11talk}.
\end{enumerate}

Let us now  show  how Question~\ref{q:Rademacher_weak} is  related to Question~\ref{q:Afree}. In~\cite[Theorem~1.1]{AlbertiMarchese16} Alberti \& Marchese have shown the following result:

\begin{theorem}[Alberti--Marchese] Let \(\nu\in \Mcal_+(\R^d)\) be a positive Radon measure. Then, there exists a vector space-valued \(\nu\)-measurable map \(V(\nu,x)\) (the \emph{decomposability bundle of \(\nu\)}) such that:
\begin{enumerate}[(i)]
\item Every Lipschitz function \(f \colon \R^d\to \R\) is differentiable in the directions of \(V(\nu,x)\) at \(\nu\)-almost every \(x\).
\item There exists a Lipschitz function \(f \colon \R^d\to \R\) such that for \(\nu\)-almost every \(x\) and every \(v\notin V(\nu,x)\) the derivative of \(f\) at \(x\) in the direction of \(v\) does not exist.
\end{enumerate}
\end{theorem}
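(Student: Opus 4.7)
The plan is to follow Alberti--Marchese: first construct the decomposability bundle $V(\nu,\cdot)$ via $1$-rectifiable decompositions of $\nu$, then derive the positive assertion (i) from the one-dimensional Rademacher theorem, and finally build a universal \enquote{bad} Lipschitz function for the maximality assertion (ii).

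To construct $V(\nu,x)$, consider the class of $1$-rectifiable decompositions of $\nu$, namely finite positive measures $\mu$ on the space of Lipschitz curves $\gamma \colon [0,1] \to \R^d$ such that the integrated measure $\int \Hcal^1 \restrict \gamma \dd \mu(\gamma)$ dominates $\nu$. Each such $\mu$ equips $\nu$-a.e.\ $x$ with a tangent line, namely the tangent at $x$ to $\mu$-a.e.\ curve passing through $x$. Then define $V(\nu,x)$ as the $\nu$-measurable subspace spanned by the tangent lines arising from a countable family of such decompositions chosen to saturate the attained dimension; a standard maximality argument guarantees that such a family realises the $\nu$-essential supremum.

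For (i), fix a Lipschitz function $f$ and a decomposition $\mu$. Along each curve $\gamma$ the composition $f \circ \gamma$ is Lipschitz in one variable and hence differentiable $\Hcal^1$-a.e. A Fubini argument with respect to $\mu$ then shows that $f$ admits a directional derivative along the tangent of $\mu$-a.e.\ curve through $\nu$-a.e.\ $x$. Countable saturation over the decompositions used in the construction of $V(\nu,\cdot)$ yields differentiability of $f$ in every direction of $V(\nu,x)$ at $\nu$-a.e.\ $x$.

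The delicate step is (ii): producing a single universal Lipschitz function $f$ whose directional derivative fails at $\nu$-a.e.\ $x$ in every direction $v \notin V(\nu,x)$. The strategy is to assemble $f = \sum_k \eps_k \phi_k$ as a summable series of Lipschitz sawtooth blocks $\phi_k$ oscillating at prescribed scales, centres, and directions drawn from a countable dense library, each $\phi_k$ being designed to exploit the failure of any $1$-dimensional decomposition of $\nu$ in its prescribed direction. A Borel--Cantelli-type argument then shows that at $\nu$-a.e.\ $x$ and for each $v \notin V(\nu,x)$ infinitely many $\phi_k$ contribute difference quotients along $v$ that cannot settle down. The main obstacle lies exactly here: the scales, centres, and directions of the $\phi_k$ must be coupled so that the single function $f$ witnesses non-differentiability along every transverse direction simultaneously at $\nu$-a.e.\ $x$. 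Conceptually, this maximality of $V(\nu,x)$ parallels the wave-cone constraint of Theorem~\ref{thm:main}: a non-trivial singular part of $\nu$ is compatible with differentiability of Lipschitz maps only along the directions of a $1$-dimensional decomposition, much as polars of $\Acal$-free singular measures are forced into $\Lambda_{\Acal}$.
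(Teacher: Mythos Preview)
The paper does not contain a proof of this statement at all: it is quoted verbatim from Alberti--Marchese~\cite[Theorem~1.1]{AlbertiMarchese16} and used as a black box to reformulate Question~\ref{q:Rademacher_weak} as Question~\ref{gdp:q3}. There is therefore no ``paper's own proof'' to compare your proposal against.

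That said, your outline does track the architecture of the original Alberti--Marchese argument: the bundle $V(\nu,x)$ is indeed built from families of $1$-dimensional decompositions (what they call Alberti representations) via a maximality/essential-supremum procedure, and~(i) is exactly the Fubini-plus-one-dimensional-Rademacher argument you describe. For~(ii) you correctly identify both the mechanism (superposition of sawtooth-type oscillations at dyadic scales along a dense set of directions) and the genuine difficulty (arranging that a \emph{single} function witnesses non-differentiability in \emph{all} transverse directions simultaneously, $\nu$-a.e.). What you have written for~(ii), however, is a description of the obstacle rather than its resolution: the actual proof in~\cite{AlbertiMarchese16} requires a careful quantitative construction showing that whenever $\nu$ is \emph{not} decomposable in a cone of directions $C$, one can build a Lipschitz function with uniformly large oscillation of difference quotients along $C$ on a set of almost full $\nu$-measure, and then a Baire-category/gluing argument to pass from cones to all transverse directions at once. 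Your sketch stops short of this, so as a proof it has a real gap at the hardest step; but since the present paper never attempts this proof either, the discrepancy is with~\cite{AlbertiMarchese16} rather than with anything here.
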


Thanks to the above theorem, Question~\ref{q:Rademacher_weak} is then  equivalent to the following:

\begin{question}\label{gdp:q3}Let \(\nu\in \Mcal_+(\R^d)\) be a positive Radon measure such that \(V(\nu,x)=\R^d\) for \(\nu\)-almost every \(x\). Is \(\nu\) absolutely continuous with respect to $\Lcal^d$?
\end{question}

The link between the above question and Theorem~\ref{thm:main} is due to the following result, again due to Alberti \& Marchese, see~\cite[Corollary 6.5]{AlbertiMarchese16} and~\cite[Lemma 3.1]{DePhilippisRindler16}\footnote{In the cited  references the results are stated in terms of normal currents. By the trivial identifications of the space of normal currents with the space of measure-valued vector fields whose divergence is a  measure  it is immediate to see that they are  equivalent to our Lemma~\ref{lem:bundle}}.

\begin{lemma}\label{lem:bundle}
Let \(\nu\in\Mcal_+(\R^d)\) be  a positive Radon measure. Then the  following are equivalent:
\begin{enumerate}
\item The decomposability bundle of \(\nu\) is of full dimension, i.e.\ \(V(\nu,x)=\R^d\) for \(\nu\)-almost every \(x\).
\item There exist \(\R^d\)-valued measures  \(\mu_1,\dots,\mu_d\in \Mcal (\R^d;\R^d)\)  with measure-valued divergences \( \diverg \mu_i\in \Mcal(\R^d;\R)\)  such that  \(\nu\ll |\mu_i |\) for \(1=1,\dots,d\) and\footnote{Note that since \(\nu\ll |\mu_i |\) for all \(i=1,\dots,d\), in item (ii) above all the Radon-Nikodym derivatives \(\frac{\di \mu_i}{\di|\mu_i|}\) \(i=1,\dots,d\) exist for \(\nu\)-a.e.\ \(x\). }  
 \begin{equation}\label{gdp:span}
 \spn\bigg \{\frac {\di \mu_1}{\di |\mu_1|}(x),\dots,\frac {\di \mu_d}{\di |\mu_d|}(x)\bigg\}=\R^d\qquad\textrm{for \(\nu\)-a.e.\ \(x\).}
\end{equation}
\end{enumerate}

\end{lemma}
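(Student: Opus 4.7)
The plan is to pass to the equivalent formulation in terms of normal $1$-currents, in which language both implications essentially appear in~\cite{AlbertiMarchese16}. Recall the standard correspondence sending an $\R^d$-valued measure $\mu$ with $\diverg \mu \in \Mcal(\R^d)$ to the normal $1$-current $T_\mu$ defined by $T_\mu(\sum_i \omega_i\,\di x^i) := \sum_i \int \omega_i \di \mu_i$; under it the mass measure of $T_\mu$ equals $|\mu|$ and the orientation field equals $\frac{\di \mu}{\di |\mu|}$. Thus the lemma amounts to showing that $V(\nu,\cdot) = \R^d$ at $\nu$-a.e.\ point is equivalent to $\nu$ being absolutely continuous with respect to the mass measures of $d$ normal $1$-currents whose orientations span $\R^d$ at $\nu$-a.e.\ point.

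For the implication $(1) \Rightarrow (2)$, I would unwind the Alberti--Marchese definition of the decomposability bundle: $V(\nu,x)$ is generated, up to $\nu$-null sets, by the orientation vectors at $x$ of normal $1$-currents whose mass measures dominate (portions of) $\nu$. Given $V(\nu,x) = \R^d$ for $\nu$-a.e.\ $x$, a measurable selection of a frame in $V(\nu,\cdot)$ combined with a countable exhaustion of $\nu$ produces normal $1$-currents $T_1, \dots, T_d$ whose orientations span $\R^d$ at $\nu$-a.e.\ $x$ and with $\nu \ll \|T_i\|$ for each $i$. The associated vector measures $\mu_i$, obtained by reversing the identification above, then satisfy all the requirements of~(2).

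For the converse $(2) \Rightarrow (1)$, the main tool is Smirnov's decomposition theorem, which writes each $T_{\mu_i}$ as a superposition $T_{\mu_i} = \int [\gamma]\, \di \pi_i(\gamma)$ of elementary integral currents supported on injective Lipschitz curves, in such a way that $|\mu_i| = \int \| [\gamma] \|\, \di \pi_i(\gamma)$ and the tangent vector of the $\pi_i$-a.e.\ curve through $x$ coincides with $\frac{\di \mu_i}{\di |\mu_i|}(x)$ for $|\mu_i|$-a.e.\ $x$. Since $\nu \ll |\mu_i|$, this Lipschitz curve family is $\nu$-admissible in the Alberti--Marchese sense, and therefore $\frac{\di \mu_i}{\di |\mu_i|}(x) \in V(\nu,x)$ at $\nu$-a.e.\ $x$. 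Combining this for $i = 1, \dots, d$ with the spanning assumption~\eqref{gdp:span} yields $V(\nu,x) = \R^d$ at $\nu$-a.e.\ $x$, which is~(1).

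The main obstacle I anticipate lies in $(2) \Rightarrow (1)$, specifically in transferring the identity ``tangent of the generic Smirnov curve equals the polar of $\mu_i$'' from $|\mu_i|$-almost every point to $\nu$-almost every point. This is accomplished by a Fubini-type argument applied to the disintegration $|\mu_i| = \int \|[\gamma]\|\, \di \pi_i(\gamma)$, using exactly the absolute continuity hypothesis $\nu \ll |\mu_i|$ to discard the $|\mu_i|$-null (hence $\nu$-null) exceptional set on which the identity could fail. Once the identification of polar and tangent directions is secured on a $\nu$-full set, the two conditions of the lemma become parallel descriptions of the same underlying geometric data and the equivalence closes.
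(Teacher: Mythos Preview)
Your approach is correct and is precisely the one the paper indicates: the paper does not prove Lemma~\ref{lem:bundle} but cites \cite[Corollary~6.5]{AlbertiMarchese16} and \cite[Lemma~3.1]{DePhilippisRindler16}, remarking in a footnote that the translation between normal $1$-currents and $\R^d$-valued measures with measure divergence is immediate---exactly the identification you begin with. Your sketch of $(1)\Rightarrow(2)$ via the Alberti--Marchese description of the decomposability bundle and of $(2)\Rightarrow(1)$ via Smirnov's decomposition (with the $\nu\ll|\mu_i|$ absolute continuity to pass from $|\mu_i|$-a.e.\ to $\nu$-a.e.) is the standard way to unpack those citations, and the obstacle you flag is handled just as you describe.
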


With the above lemma at hand,  a positive answer to Question~\ref{gdp:q3} (and thus to Question~\ref{q:Rademacher_weak}) follows from Theorem~\ref{thm:main} in a straightforward fashion:

\begin{theorem} \label{thm:convRademacher}
Let \(\nu\in \Mcal_+(\R^d)\) be a positive Radon measure such that every Lipschitz function is differentiable \(\nu\)-almost everywhere. Then, \(\nu\) is absolutely continuous with respect to $\Lcal^d$.
\end{theorem}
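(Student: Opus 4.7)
The plan is to argue by contradiction. Suppose $\nu$ is not absolutely continuous with respect to $\Lcal^d$, so the Lebesgue decomposition yields $\nu^s \neq 0$. By the Alberti--Marchese theorem quoted above, the differentiability hypothesis on $\nu$ is equivalent to $V(\nu,x)=\R^d$ for $\nu$-a.e.\ $x$, and Lemma~\ref{lem:bundle} then produces vector-valued measures $\mu_1,\dots,\mu_d\in\Mcal(\R^d;\R^d)$ with measure-valued divergences $\diverg\mu_i\in\Mcal(\R^d;\R)$, satisfying $\nu\ll|\mu_i|$ for every $i$ together with the spanning condition \eqref{gdp:span}.

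The second step is to aggregate these into a single PDE-constrained measure. Define
\[
\tilde\mu := (\mu_1,\dots,\mu_d)\in\Mcal(\R^d;\R^d\otimes\R^d),
\]
viewed as a matrix-valued measure whose $i$-th row is $\mu_i$, and let $\tilde\Acal$ be the first-order operator that applies $\diverg$ row by row. A quick symbol computation gives $\mathbb A(\xi)M = 2\pi\ii\, M\xi$, so
\[
\Lambda_{\tilde\Acal} \;=\; \bigcup_{\xi\neq 0}\{\,M\in\R^d\otimes\R^d : M\xi=0\,\} \;=\; \{\,M : \det M=0\,\}.
\]
Since $\tilde\Acal\tilde\mu = (\diverg\mu_1,\dots,\diverg\mu_d)\in\Mcal(\R^d;\R^d)$, Theorem~\ref{thm:main} in the inhomogeneous form of Remark~\ref{rem:rhs} yields that $\frac{\di\tilde\mu}{\di|\tilde\mu|}(x)$ is a rank-deficient matrix for $|\tilde\mu|^s$-a.e.\ $x$.

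The third step, which is the main obstacle, is to transfer the spanning property \eqref{gdp:span} from the polars of the individual $\mu_i$ (with respect to their own total variations) to the polar of the joint measure $\tilde\mu$. By the chain rule for Radon--Nikodym derivatives,
\[
\frac{\di\mu_i}{\di|\tilde\mu|}(x) \;=\; \frac{\di|\mu_i|}{\di|\tilde\mu|}(x)\cdot\frac{\di\mu_i}{\di|\mu_i|}(x),
\]
and because $|\mu_i|\ll|\tilde\mu|$ while $\nu\ll|\mu_i|$, the zero set of the scalar factor $\frac{\di|\mu_i|}{\di|\tilde\mu|}$ is $|\mu_i|$-null and hence $\nu$-null. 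Combined with \eqref{gdp:span}, this shows that the rows of $\frac{\di\tilde\mu}{\di|\tilde\mu|}(x)$ span $\R^d$, i.e.\ that this matrix is invertible, for $\nu$-a.e.\ $x$.

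To close the argument, set $E := \{\,x : \det\frac{\di\tilde\mu}{\di|\tilde\mu|}(x) = 0\,\}$. The wave-cone conclusion gives $|\tilde\mu|^s(\R^d\setminus E) = 0$, while the previous paragraph gives $\nu(E) = 0$. Since $\nu\ll|\mu_i|\ll|\tilde\mu|$, we have $\nu\ll|\tilde\mu|$, and uniqueness of the Lebesgue decomposition then implies $\nu^s\ll|\tilde\mu|^s$. Therefore $\nu^s(\R^d) = \nu^s(E) \le \nu(E) = 0$, contradicting $\nu^s \neq 0$, so $\nu\ll\Lcal^d$ as claimed.
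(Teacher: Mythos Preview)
Your proof is correct and follows essentially the same route as the paper's: assemble the $\mu_i$ into a matrix-valued measure, apply Theorem~\ref{thm:main} (via Remark~\ref{rem:rhs}) to the row-wise divergence operator to force rank-deficiency of the polar on the singular part, and contrast this with the spanning condition~\eqref{gdp:span} to conclude $\nu^s=0$. Your third step spells out in detail the chain-rule/Radon--Nikodym argument that the paper compresses into the single clause ``by~\eqref{gdp:span}, $\nu$ is singular with respect to $|\boldsymbol{\mu}|^s$''.
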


\begin{proof}
Let \(\nu\) be a measure such that   \(V(\nu,x)=\R^d\) for \(\nu\)-almost every \(x\) an let \(\mu_i\) be the measures provided by  Lemma~\ref{lem:bundle}~(ii). Let us  consider the matrix-valued measure
\[
\boldsymbol{\mu}:=
\begin{pmatrix}
\mu_1  \\
\vdots\\
\mu_d \\
\end{pmatrix}
\in  \Mcal (\R^d;\R^d\otimes \R^d).
\]
Note that  \(\diverg \boldsymbol \mu\in \Mcal(\R^d;\R^d)\), where \(\diverg\) is the row-wise divergence operator. Since, by direct computation, 
\[
\Lambda_{\diverg}=\setb{ M\in \R^d\otimes \R^d }{ {\rm rank}\, M\le d-1 },
\] 
Theorem~\ref{thm:main} and Remark~\ref{rem:rhs} imply that \({\rm rank}\, \bigl(\frac{\di \boldsymbol{\mu}}{\di |\boldsymbol{\mu}|}\bigr)\le d-1\)  for \(|\boldsymbol{\mu}|^s\)-almost every point. Hence, by~\eqref{gdp:span}, \(\nu\) is singular with respect to \(|\boldsymbol{\mu}|^s\). On the other hand, since \(\nu\ll |\mu_i|\) for all \(i=1,\dots,d\), we get \(\nu^s\ll |\boldsymbol{\mu}|^s\). Hence, we conclude \(\nu^s=0\), as desired. 
\end{proof}

Let us conclude this section by remarking that the weak converse of Rademacher's theorem, i.e.\ a positive answer to Question~\ref{q:Rademacher_weak},  has some consequences for the structure of Ambrosio--Kirchheim metric currents~\cite{AmbrosioKirchheim00}, see the work of Schioppa~\cite{Schioppa16}. In particular, it allows one to prove the  top-dimensional case of the flat chain conjecture proposed by Ambrosio and Kirchheim in~\cite{AmbrosioKirchheim00}. 


\section{Cheeger's conjecture}
Among the many applications of Rademacher's theorem, it allows one to pass from ``non-infintesimal'' information (the existence of certain Lipschitz maps) to infinitesimal information. For instance, one can easily establish the following fact:

\medskip
\emph{There is no bi-Lipschitz map \(f:\R^d\to \R^\ell\) if \(d\ne \ell\)}.
\medskip

Indeed, if this were the case, Rademacher's theorem  would imply  (at a differentiability point) the existence of a bijective \emph{linear} map from \(\R^d\) to \(\R^\ell\) with  \(d\ne \ell\).  

While the above statement is an immediate consequence of the theorem on the invariance of dimension (asserting that there are no bijective continuous maps from \(\R^d\) to \(\R^\ell\) if \(d\ne \ell\)), the point here is that the almost everywhere result allows to pass from a non-linear statement (the existence of a bi-Lipschitz map) to a linear one, whose rigidity can be proved by elementary methods. 

This line of thought has been adopted in the study of rigidity of several metric structures.  For instance, the natural generalization of Rademacher's theorem  in the context of Carnot groups, which was established by Pansu in~\cite{Pansu89}, allows one to show that there are no bi-Lipscitz embeddings of a Carnot group into \(\R^\ell\) if the former is non-commutative.

The fact that (a suitable notion of)  differentiability of Lipschitz functions allows to develop a first-order calculus on metric spaces and in turn to obtain non-embedding results has been recognised by Cheeger in his seminal paper~\cite{Cheeger99}  and later studied by several authors.


Let us briefly introduce the theory of Cheeger as it has been axiomatized by Keith in~\cite{Keith04}. Note that  it is natural  to generalize Rademacher's theorem to the setting of \emph{metric measure spaces} since we need to talk about Lipschitz functions (a metric concept) and almost everywhere (a measure-theoretic concept).

Let  $(X,\rho,\mu)$ be a  \emph{metric measure space}, that is, $(X,\rho)$ is a separable, complete metric space and $\mu \in \Mcal_+(X)$ is a positive  Radon measure on $X$. We call a pair \((U,\phi)\) such that  $U \subset X$ is a Borel set and  $\phi \colon X \to \R^d$ is Lipschitz, a \emph{$d$-dimensional chart}, or simply a \emph{$d$-chart}. A function  $f \colon X \to \R$ is said to be \emph{differentiable with respect to a $d$-chart $(U,\phi)$}  at $x_0 \in U$ if there exists a unique (co-)vector $df(x_0) \in \R^d$ such that
\begin{equation} \label{eq:LDS_diff}
  \limsup_{x\to x_0} \frac{\abs{f(x) - f(x_0) - df(x_0) \cdot (\phi(x) - \phi(x_0))}}{\rho(x,x_0)} = 0.
\end{equation}

\begin{definition}A metric measure space $(X,\rho,\mu)$ is a \emph{Lipschitz differentiability space}  if there exists a countable family of \(d(i)\)-charts $(U_i,\phi_i)$ ($i \in \N$) such that $X=\bigcup_iU_i$ and any Lipschitz map $f \colon X \to \R$ is differentiable with respect to every $(U_i,\phi_i)$ at $\mu$-almost every point $x_0 \in U_i$.
\end{definition} 

In this terminology, the main result of~\cite{Cheeger99} asserts that  every doubling  metric measure space $(X,\rho,\mu)$  satisfying a Poincar\'{e} inequality is a Lipschitz differentiability space.

%
In the same paper, Cheeger conjectured that the push-forward of the reference measure \(\mu\) under every chart \(\varphi_i\) has to be absolutely continuous with respect to the Lebesgue measure, see~\cite[Conjecture 4.63]{Cheeger99}:

\begin{question}
For every $d$-chart $(U,\phi)$ in a Lipschitz differentiability space, does it hold that $\phi_\# (\mu \restrict U) \ll \Lcal^d$?
\end{question}

Some consequences of this fact concerning the existence of bi-Lipschitz embeddings of \(X\) into some \(\R^N\) are detailed in~\cite[Section 14]{Cheeger99}, also see~\cite{CheegerKleiner06,CheegerKleiner09}.

Let us assume that  \((X,\rho,\mu) = (\R^d,\rho_{\mathcal E}, \nu)\) with \( \rho_{\mathcal E}\) the Euclidean distance and \(\nu\) a positive Radon measure, is a Lipschitz differentiability space when equipped with the (single) identity chart (note that it follows a-posteriori from the validity of Cheeger's conjecture that no mapping into a higher-dimensional space can be a chart in a Lipschitz differentiability structure of $\R^d$). In this case the validity of Cheeger's conjecture reduces  to the validity of the  (weak) converse of Rademacher's theorem, which we stated above in Theorem~\ref{thm:convRademacher}.


One can also prove the assertion of Cheeger's conjecture directly. Indeed, from the work of Bate~\cite{Bate15}, and Alberti--Marchese~\cite{AlbertiMarchese16} an analogue of Lemma~\ref{lem:bundle} for $\varphi_\# (\mu\restrict U)$ in place of $\mu$ follows, see also~\cite{Schioppa16b,Schioppa16}. This allows one to conclude as in the proof of Theorem~\ref{thm:convRademacher} to get:

\begin{theorem} \label{thm:Cheeger}
Let $(X,\rho,\mu)$ be a Lipschitz differentiability space and let \((U,\varphi)\) be a \(d\)-dimensional chart. Then, 
\(
\varphi_\# (\mu\restrict U)\ll \Lcal^d.
\)
\end{theorem}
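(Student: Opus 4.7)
The plan is to reduce Theorem~\ref{thm:Cheeger} to the exact argument used in the proof of Theorem~\ref{thm:convRademacher}, with the only genuinely new ingredient being an analogue of Lemma~\ref{lem:bundle} for the push-forward measure. Concretely, I set $\nu := \varphi_\#(\mu\restrict U) \in \Mcal_+(\R^d)$, and I aim to show that there exist vector-valued measures $\mu_1,\dots,\mu_d \in \Mcal(\R^d;\R^d)$ with measure-valued divergences $\diverg \mu_i \in \Mcal(\R^d;\R)$ such that $\nu \ll |\mu_i|$ for every $i$ and
\[
\spn\biggl\{\frac{\di \mu_1}{\di |\mu_1|}(x),\dots,\frac{\di \mu_d}{\di |\mu_d|}(x)\biggr\} = \R^d \qquad\textrm{for $\nu$-a.e.\ $x \in \R^d$.}
\]
Once this is available, Theorem~\ref{thm:main} does the rest of the work.

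The main obstacle is producing those $\mu_i$ from the abstract Lipschitz differentiability hypothesis; this is exactly the bridge between the metric-measure world of $(X,\rho,\mu)$ and the Euclidean $\Acal$-free world in which Theorem~\ref{thm:main} lives. The idea is to use the theory of \emph{Alberti representations} of $\mu$ in the sense of Bate~\cite{Bate15}: the existence of a $d$-dimensional chart $(U,\varphi)$ on which every Lipschitz function is differentiable forces $\mu\restrict U$ to admit $d$ Alberti representations whose velocity vectors span $\R^d$ at $\mu$-almost every point. Pushing these representations forward under $\varphi$ yields $d$ families of Lipschitz curves in $\R^d$ whose tangent directions span $\R^d$ at $\nu$-a.e.\ point; integration along these curves against the pushed-forward weights produces exactly the vector measures $\mu_i$, whose measure-valued divergences arise from the endpoint terms, and the spanning condition transfers verbatim. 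This is precisely the identification made in~\cite[Corollary~6.5]{AlbertiMarchese16} combined with the Alberti-representation viewpoint of~\cite{Schioppa16b,Schioppa16}, and it is where all the metric-geometric content is concentrated.

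With this analogue of Lemma~\ref{lem:bundle} in hand, the conclusion follows the proof of Theorem~\ref{thm:convRademacher} word for word: I stack the $\mu_i$ into the matrix-valued measure
\[
\boldsymbol{\mu} := \begin{pmatrix} \mu_1 \\ \vdots \\ \mu_d \end{pmatrix} \in \Mcal(\R^d;\R^d\otimes \R^d),
\]
whose row-wise divergence satisfies $\diverg \boldsymbol{\mu} \in \Mcal(\R^d;\R^d)$. Because $\Lambda_{\diverg}$ consists exactly of matrices of rank at most $d-1$, Theorem~\ref{thm:main} together with Remark~\ref{rem:rhs} forces $\rank \bigl(\frac{\di \boldsymbol{\mu}}{\di |\boldsymbol{\mu}|}\bigr) \le d-1$ at $|\boldsymbol{\mu}|^s$-a.e.\ point. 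The spanning condition then rules out $\nu$ being carried by $|\boldsymbol{\mu}|^s$, while the absolute continuity $\nu \ll |\mu_i|$ for each $i$ gives $\nu^s \ll |\boldsymbol{\mu}|^s$. These two facts together force $\nu^s = 0$, i.e.\ $\varphi_\#(\mu\restrict U) \ll \Lcal^d$, which is the claim. In summary, the entire non-trivial work is pushed into the metric-geometric step producing the $\mu_i$; the PDE side-constraint argument via Theorem~\ref{thm:main} is then essentially automatic.
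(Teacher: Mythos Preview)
Your proposal is correct and matches the paper's own argument essentially verbatim: the paper likewise reduces to an analogue of Lemma~\ref{lem:bundle} for $\varphi_\#(\mu\restrict U)$ via Bate~\cite{Bate15}, Alberti--Marchese~\cite{AlbertiMarchese16}, and Schioppa~\cite{Schioppa16b,Schioppa16}, and then concludes exactly as in the proof of Theorem~\ref{thm:convRademacher}. The only thing to add is that the paper defers the details of the metric-geometric step to~\cite{DePhilippisMarcheseRindler17}, so your sketch of how Alberti representations push forward under $\varphi$ is already slightly more explicit than what appears here.
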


The details can be found in~\cite{DePhilippisMarcheseRindler17}.


We conclude this section by mentioning that the weak converse of Rademacher's theorem  also has some consequences concerning the structure of measures on metric measure spaces with Ricci curvature bounded from below, see~\cite{KellMondino16?, GigliPasqualetto16?}.

\section{Sketch of the proof of Theorem~\ref{thm:main}}\label{sc:proof}
In this section we shall give some details concerning the proof of Theorem~\ref{thm:main}. For simplicity we will only consider the case in which \(\Acal\) is a \emph{first-order homogeneous} operator, namely we will assume  that \(\mu\) satisfies 
\begin{equation*}
\Acal\mu =\sum_{j=1}^d A_j\partial_j \mu = 0  \qquad \text{in the sense of distributions.}
\end{equation*}
Note that in this case we have
\[
\Lambda_{\Acal} =\bigcup_{|\xi|=1}\ker \mathbb A(\xi),
\qquad \mathbb A(\xi)=\mathbb A^1(\xi) =2\pi {\rm i} \, \sum_{j=1}^d A_j \xi_j.
\]
Let 
\[
E:=\setBB{ x\in \Omega}{\frac{\di\mu}{\di|\mu|}(x)\notin \Lambda_{\Acal}},
\]
and let us assume by contradiction that \(|\mu|^s(E)>0\).

Employing a fundamental technique of geometric measure theory, one can ``zoom in'' around a generic point of \(E\). Indeed, one can show that for \(|\mu|^s\)-almost every  point \(x_0\in E\) there exists a sequence of radii \(r_k\downarrow 0\) such that 
\[   
\wslim_{k\to \infty} \frac{(T^{x_0,r_k})_\#\mu}{|\mu|(B_{r_k}(x_0))}=\wslim_{k\to \infty} \frac{(T^{x_0,r_j})_\#\mu^s}{|\mu|^s(B_{r_k}(x_0))} =P_0 \nu,
\]
where  \(T^{x,r} \colon \R^d\to \R^d\) is the dilation map \(T^{x,r}(y):=(y-x)/r\), \(T^{x,r}_\#\) denotes the push-forward operator\footnote{That is, for any measure \(\sigma\) and Borel set \(B\), \([(T^{x,r})_\#\sigma](B):=\sigma(x+rB)\)}, \(\nu\in {\rm Tan} (x_0,|\mu|)={\rm Tan} (x_0,|\mu|^s)\) is a non-zero tangent measure in the sense of Preiss~\cite{Preiss87}, 
\[
\lambda_0 =\frac{\di \mu}{\di |\mu|}(x_0)  \notin \Lambda_{\Acal},
\]
and the limit is to be understood in the weak* topology of Radon measures (i.e.\ in duality with compactly supported continuous functions). Moreover, one easily checks that
\[
\sum_{j=1}^d A_j  \frac{\di \mu}{\di |\mu|}(x_0)\,\partial_j \nu=0  
\qquad \text{in the sense of distributions.}
\]
By taking the Fourier transform  of the above equation, we get
\[
[\mathbb A (\xi) \lambda_0] \, \hat \nu(\xi)=0,  \qquad \xi \in \R^d.
\]
where  \(\hat \nu(\xi)\) is the Fourier transform of \(\nu\) in the sense of distributions (actually,  \(\nu\) does not need to  be a tempered distribution, hence some care is needed, see below for more details). Having assumed that \(\lambda_0\notin \Lambda_{\Acal}\), i.e.\ that 
\[
\mathbb A (\xi) \lambda_0\ne 0  \quad \text{for all \(\xi\ne 0\),} 
\]
 this implies \({\rm supp}\,\hat \nu=\{0\}\) and thus \(\nu\ll \Lcal^d\).  The latter fact, however, is not by itself a contradiction to 
\(\nu \in {\rm Tan} (x_0,|\mu|^s)\). Indeed, Preiss~\cite{Preiss87}  provided an example of a purely singular measure that has only multiples of Lebesgue measure as tangents (we also refer to~\cite{Oneill95} for a measure that has \emph{every} measure as a tangent at almost every point).

The above reasoning provides a sort of \emph{rigidity} property for \(\Acal\)-measures: If, for a  constant  polar vector  \(\lambda_0\notin \Lambda_{\Acal}\) and a measure  \(\nu\in \Mcal_+(\R^d)\), the measure  \(\lambda_0\nu\) is \(\Acal\)-free, then necessarily \(\nu\ll \Lcal^d\). However, as we commented above, this is not enough to conclude. In order to prove the theorem we need to strengthen this rigidity property (absolutely continuity of the measures \(\lambda_0\nu\) with \(\lambda_0\notin\Lambda_\Acal\))  to a stability  property which can be roughly stated as follows:

\medskip

{\em \(\Acal\)-free measures \(\mu\) with \(|\mu|\big(\big\{x: \frac{\di\mu}{\di|\mu|}(x) \in \Lambda_\Acal\big\}\big)\ll1\) have small singular part.}

\medskip

In this respect note that since  \(\lambda_0\notin \Lambda_{\Acal} \) implies that  \(\mathbb A (\xi)\lambda_0\ne 0\) for \(\xi\ne 0\), one can hope for some sort of ``elliptic regularization'' that forces not only \(\nu\ll \Lcal^d\) but also 
\[
\mu_k :=  \frac{(T^{x_0,r_k})_\#\mu}{|\mu|^s(B_{r_k}(x_0))}\ll  \Lcal^d,
\]
at least for small $r_k$. This is  actually the case: Inspired by Allard's strong constancy lemma in~\cite{Allard86}, we can show that  the ellipticity of the system at the limit (i.e.\ that \(\mathbb A (\xi)\lambda_0\ne 0\)) improves   the weak* convergence of \((\mu_k)\) to  convergence in the   total variation norm, i.e.\
\begin{equation}\label{gdp:totvar}
  |\mu_k-\lambda_0\nu|(B_{1/2})\to 0.
\end{equation}
 Since the singular part of \(\mu_k\) is asymptotically predominant around \(x_0\), see \eqref{gdp:yeah} below, this latter fact  implies that 
\begin{equation*}
  |\mu^s_k-\lambda_0\nu|(B_{1/2})\to 0,
\end{equation*}
which easily gives a contradiction to $\nu \ll \Lcal^d$ and concludes the proof.
 
Let us briefly sketch   how \eqref{gdp:totvar} is obtained. For \(\chi\in \Dcal(B_1)\), \(0\le \chi\le 1\), consider the measures $\lambda_0\chi \nu_k$, where
\[
 \nu_k :=\frac{(T^{x_0,r_k})_\#|\mu|^s}{|\mu|^s(B_{r_k}(x_0))},
\]
and note that, since we can assume that for the chosen \(x_0\) it holds that 
\begin{equation}\label{gdp:yeah}
 \frac{|\mu|^a(B_{r_k}(x_0))}{|\mu|^s(B_{r_k}(x_0))}\to 0, \qquad  \dashint_{B_{r_k}(x_0)} \biggl|\frac{\di \mu}{\di |\mu|}(x)-\frac{\di \mu}{\di |\mu|}(x_0)\biggr| \dd |\mu|^s(x)\to 0,
\end{equation}
we have that 
\begin{equation}\label{gdp:yeah2}
|\lambda_0\chi \nu_k-\chi \mu_k|(\R^d)\le |\lambda_0 \nu_k- \mu_k|(B_1)\to 0.
\end{equation}
Using the \(\Acal\)-freeness of \(\mu_k\) (which trivially follows from the one of \(\mu\)) we can derive an equation for \(\chi \nu_k\):
\begin{equation}\label{gdp:yeah3}
\sum_{j=1}^d A_j \lambda_0\partial_{j} (\chi \nu_k)=\sum_{j=1}^d A_j\partial_{j} (\lambda_0\chi \nu_k-\chi\mu_k)+\sum_{j=1}^d A_j \mu_k \partial_j\chi.
\end{equation}
Since we are essentially dealing with a-priori estimates, in the following we   treat measures as if they were smooth \(\Lrm^1\)-functions;   this can be achieved by a sufficiently fast regularization, see~\cite{DePhilippisRindler16} for more details.

Taking the Fourier transform of equation \eqref{gdp:yeah3} (note that we are working with compactly supported functions) we obtain
\begin{equation}\label{gdp:yeah4}
\mathbb A(\xi)\lambda_0 \widehat{\chi \nu_k} (\xi )=\mathbb A(\xi) \widehat{V_k} (\xi )+\widehat {R_k} (\xi),
\end{equation}
where
\begin{equation}\label{gdp:quasi1}
V_k:=\lambda_0\chi \nu_k-\chi\mu_k\qquad \textrm{satisfies}\qquad|V_k|(\R^d)\to 0
\end{equation}
and 
\begin{equation}\label{gdp:quasi2}
R_k:=\sum_{j=1}^d A_j \mu_k \partial_j\chi \qquad  \textrm{satisfies}\qquad \sup_k |R_k|(\R^d)\le C.
\end{equation}
Scalar multiplying   \eqref{gdp:yeah4} by  \(\overline{\mathbb A(\xi)P_0}\), adding \(\widehat{\chi \nu_k}\) to both sides and rearranging the terms, we arrive to 
\begin{equation}\label{gdp:sticazzi}
\begin{split}
 \widehat{\chi \nu_k} (\xi )&=\phantom{:} \frac{ \overline{\mathbb  A(\xi)\lambda_0}  \mathbb A(\xi) \widehat{V_k} (\xi )}{1+|\mathbb  A(\xi)\lambda_0|^2}+\frac{\overline{ \mathbb A(\xi)\lambda_0} \cdot \widehat {R_k} (\xi)}{1+|\mathbb  A(\xi)\lambda_0|^2}+\frac{\widehat {\chi\nu_k}(\xi)}{1+|\mathbb  A(\xi)\lambda_0|^2}\\
 &=: T_0(V_k)+T_1(R_k)+T_2(\chi \nu_k),
\end{split}
\end{equation}
where 
\begin{align*}
T_0[V] &= \mathcal F^{-1} \bigl[m_0(\xi)\hat {V}(\xi)\bigr],\\
T_1[R]&=\mathcal F^{-1} \bigl[m_1(\xi)(1+4\pi^2|\xi|^2)^{-1/2} \hat {R}(\xi)\bigr],\\
T_2[u]&= \mathcal F^{-1} \bigl[m_2(\xi)(1+4\pi^2|\xi|^2)^{-1}\hat {u}(\xi)\bigr],
\end{align*}
and we have set
\begin{align*}
m_0(\xi)&:=(1+|\mathbb A(\xi)\lambda_0|^2)^{-1}\overline{\mathbb  A(\xi)\lambda_0}  \mathbb A(\xi),\\
m_1(\xi)&:=(1+|\mathbb A(\xi)\lambda_0|^2)^{-1}(1+4\pi^2|\xi|^2)^{1/2}\overline{\mathbb  A(\xi)\lambda_0},\\
m_2(\xi)&:=(1+|\mathbb A(\xi)\lambda_0|^2)^{-1}(1+4\pi^2|\xi|^2).
\end{align*}
We now note that since \(\lambda_0\notin \Lambda_{\Acal}\), by homogeneity there exists \(c>0\) such that \(|\mathbb A(\xi)\lambda_0|\ge c|\xi|\) (this is the ellipticity condition we mentioned at the beginning). Hence, the symbols \(m_i\), \(i=1, 2, 3\), satisfy the assumptions of the H\"ormander--Mihlin multiplier theorem~\cite[Theorem 5.2.7]{Grafakos14book1}, i.e.\ there exists constants $K_\beta > 0$ such that
\[
|\partial^\beta m_i (\xi)|\le K_\beta |\xi|^{-|\beta|}\qquad\text{for all $\beta\in \mathbb N^d$.}
\]
This implies that  \(T_0\) is a bounded operator from  \(\Lrm^1\) to \(\Lrm^{1,\infty}\) and thus,  thanks to~\eqref{gdp:quasi1}, we get
\begin{equation}\label{gdp:piove1}
\|T_0(V_k)\|_{\Lrm^{1,\infty}} \le C|V_k|(\R^d)\to 0.
\end{equation}
Moreover,
\begin{equation}\label{gdp:piove2}
\langle T_0( V_k),\varphi\rangle =  \langle V_k, T_0^*(\varphi)\rangle \to 0 \qquad\textrm{ for every \(\varphi\in \Dcal(\R^d)\)}.
\end{equation}
where \(T_0^*\) is the adjoint operator of \(T_0\). We also observe
\[
T_1=Q_{m_1} \circ ({\rm Id}-\Delta)^{-1/2}\qquad  \textrm{and} \qquad T_2=Q_{m_2}\circ ({\rm Id}-\Delta)^{-1},
\]
where \( Q_{m_1}\) and \(Q_{m_2}\) are  the Fourier multipliers  operators associated with the symbols \(m_1\) and \(m_2\), respectively. In particular, again by the H\"ormander--Mihlin multiplier theorem, these operators are  bounded   from \(\Lrm^p\) to \(\Lrm^p\) for every \(p\in (1,\infty)\). Moreover,   \(({\rm Id}-\Delta)^{-s/2}\) is a compact operator from\footnote{Here we denote by  \(\Lrm_c^1(B_1)\) the space of \(\Lrm^1\)-functions vanishing outside \(B_1\).}  \(\Lrm_c^1(B_1)\) to   \(\Lrm^q\) for some  \(q=q(d,s)>1\),  see for instance~\cite[Lemma 2.1]{DePhilippisRindler16}.
In conclusion, by~\eqref{gdp:quasi2} and \(\sup_k |\chi \nu_k|(\R^d)\le C\),
\begin{equation}\label{gdp:piove3}
\big\{T_1(R_k)+T_2(\chi \nu_k)\big\}_{k\in \mathbb N} \qquad \textrm{is pre-compact in \(\Lrm^1(B_1)\).}
\end{equation}
Hence, combining equation~\eqref{gdp:sticazzi} with \eqref{gdp:piove1}, \eqref{gdp:piove2} and~\eqref{gdp:piove3} implies that 
\[
\chi \nu_k=u_k+w_k,
\]
where  \(u_k\to 0\) in \(\Lrm^{1,\infty}\), \(u_k\toweakstar 0\)  in the sense of distributions and \((w_k)\) is pre-compact in \(\Lrm^1(B_1)\). Since \(\chi\nu_k\ge 0\),  
\[
u_k^-:=\max\{-u_k,0\} \le |w_k|,
\]
so that the sequence \((u_k^-)\) is pre-compact in \(\Lrm^1(B_1)\). Since  \(u_k\to 0\) in \(\Lrm^{1,\infty}\), Vitali's convergence theorem implies that \(u_k^-\to 0\) in \(\Lrm^1(B_1)\) which, combined with \(u_k\toweakstar 0\), easily yields that \(u_k\to 0\) in \(\Lrm^1(B_1)\), see~\cite[Lemma 2.2]{DePhilippisRindler16}. In conclusion, \((\chi \nu_k)\) is pre-compact in \(\Lrm^1(B_1)\). Together with \eqref{gdp:totvar} and the weak* convergence of \(\mu_k\) to \(\lambda_0\nu\), this implies
\[
|\mu_k-\lambda_0\nu|(B_{1/2})\to 0,
\]
which concludes the proof.

\subsection*{Acknowledgements}

G.~D.~P.\ is supported by the MIUR SIR-grant ``Geometric Variational Problems" (RBSI14RVEZ). F.~R.\ acknowledges the support from an EPSRC Research Fellowship on ``Singularities in Nonlinear PDEs'' (EP/L018934/1).



\providecommand{\bysame}{\leavevmode\hbox to3em{\hrulefill}\thinspace}
\providecommand{\MR}{\relax\ifhmode\unskip\space\fi MR }
\providecommand{\MRhref}[2]{%
  \href{http://www.ams.org/mathscinet-getitem?mr=#1}{#2}
}
\providecommand{\href}[2]{#2}

\end{document}